
\documentclass{mathincs}
\usepackage[T1]{fontenc}
\include{braket}
\usepackage{subfigure}
\usepackage{float}

 \newtheorem{theorem}{Theorem}[section]
 
 \newtheorem{lemma}[theorem]{Lemma}
 \newtheorem{proposition}[theorem]{Proposition}
 \theoremstyle{definition}
 \newtheorem{definition}[theorem]{Definition}
 \theoremstyle{remark}

 \numberwithin{equation}{section}
\usepackage{graphicx}
\usepackage{amsfonts}
\usepackage{amsmath}
\usepackage{amssymb}
\usepackage{url}
 \usepackage[shortlabels]{enumitem}
\usepackage{caption}

\begin{document}

\title[Robust Hadamard matrices]{Robust Hadamard matrices, \\
unistochastic rays in Birkhoff polytope \\
and equi-entangled bases in composite spaces}

\author{Grzegorz Rajchel}
\address{Wydzia{\l} Fizyki, Uniwersytet Warszawski, ul.~Pasteura 5, 02-093~Warszawa, Poland\\
Centrum Fizyki Teoretycznej PAN, Al.~Lotnik{\'o}w 32/44, 02-668~Warszawa, Poland}
\email{grzegorz.rajchel@student.uw.edu.pl}
\author{Adam G\k{a}{}siorowski}
\address{Wydzia{\l} Fizyki, Uniwersytet Warszawski, ul.~Pasteura 5, 02-093~Warszawa, Poland}
\email{adam.gasiorowski@student.uw.edu.pl}
\author{Karol {\.Z}yczkowski}
\address{Instytut Fizyki im. Smoluchowskiego, Uniwersytet Jagiello{\'n}ski\\ ul.~\L{}ojasiewicza 11, 30-348~Krak{\'o}w, Poland\\
Centrum Fizyki Teoretycznej PAN, Al.~Lotnik{\'o}w 32/44, 02-668~Warszawa, Poland}
\email{karol.zyczkowski@uj.edu.pl}

\date{June 16, 2018}

\maketitle

\begin{abstract}
We study a special class of (real or complex) robust Hadamard matrices, 
distinguished by the property that their projection 
onto a $2$-dimensional subspace forms a Hadamard matrix.
It is shown that such a matrix of order $n$ exists, 
if there is a skew Hadamard matrix of a symmetric
conference matrix of this size.
This is the case for any even $n\le 20$,
and for these dimensions we demonstrate that
a bistochastic matrix $B$ 
located at any ray of the Birkhoff polytope,
(which joins the center of this body with any permutation matrix),
is unistochastic.
An explicit form of the corresponding unitary matrix $U$, 
such that $B_{ij}=|U_{ij}|^2$, is determined by a
robust Hadamard matrix. These unitary matrices allow
us to construct a family of orthogonal bases in the composed Hilbert space
of order $n \times n$. Each basis 
consists of vectors with the same degree of entanglement
and the constructed family interpolates between 
the product basis and the maximally entangled basis.
In the case $n=4$ we study geometry of the set 
${\mathcal U}_4$ of unistochastic matrices, 
conjecture that this set is star-shaped
and estimate its relative volume
in the Birkhoff polytope  ${\mathcal B}_4$.

\end{abstract}

 \hskip 4.0cm {\sl Dedicated to the memory of Uffe Haagerup (1949-2015)}

\section{Introduction}

Hadamard matrices with a particular structure attract a lot of 
attention, as their existence is related to several problems in 
combinatorics and mathematical physics. Usually one poses a question,
whether for a given size $n$ there exists a Hadamard matrix with a certain
symmetry or satisfying some additional conditions.
Such a search for structured Hadamard matrices
can be posed as well in the standard case
of real Hadamard matrices \cite{Horadam07},
or in the more general complex case \cite{Cr91,TZ06,Sz11}.

To simplify investigations one studies equivalence 
classes of Hadamard matrices, which differ only by permutations
of rows and columns and by multiplication
by diagonal unitary matrices. 
It was shown by Haagerup \cite{Ha96} that for dimensions $n = 2, 3$ and $n = 5$ all complex Hadamard matrices are equivalent while for $n= 4$ there exists a one parameter family of inequivalent matrices.

In the recent paper by Karlsson \cite{karlson},
aimed to construct new families of complex Hadamard matrices
of size $n=6$, the author studied $H_2$-reducible matrices
defined as Hadamard matrices with the property that
all their blocks of size $2$ form Hadamard matrices.

In this work we introduce a related, but 
different notion of {\sl robust Hadamard} matrices. 
This class contains a Hadamard matrix $H_n$, 
such that each of its principal minors of order two forms 
a Hadamard matrix $H_2$. The name refers to the fact that the  
Hadamard property is robust with respect to projection,
as any projection of $H_n$ onto a two dimensional subspace spanned 
by the elements of the standard basis is a Hadamard matrix.

The notion of robust Hadamard matrices will be 
useful to broaden our understanding of the problem of unistochasticity 
inside the Birkhoff polytope \cite{Bi46}. 
For any bistochastic matrix $B$ one asks 
whether there exist a unitary matrix $U$, 
such that $B_{ij}=|U_{ij}|^2$. In the simplest case
$n=2$ any bistochastic matrix is unistochastic,
for $n=3$ the necessary and sufficient conditions 
for this property are known \cite{AYP79,JS88,Na96}, in order $n=4$
we provide an explicit construction in Appendix A,
while for $n\ge 5$ the unistochasticity
problem remains open \cite{AMM91,BEKTZ05}.

A robust Hadamard matrix $H_n$ will be used to 
to find unitary matrices of order $n$,
designed to prove unistochasticity of bistochastic matrices 
located at any ray of the Birkhoff polytope. 
The key idea behind this construction  -- a decomposition of bistochastic
matrix of an even dimension into square blocks of size two --
was used in the algorithm of Haagerup to analyze the 
unistochasticity problem for $n=4$.
A family of unistochastic matrices
joining a permutation matrix with the flat bistochastic matrix $W_n$
of van der Waerden allows us to construct 
a family of orthogonal bases in the composed Hilbert space ${\mathcal H}_n \otimes {\mathcal H}_n$,
which contains equi-entangled vectors and
interpolates between the product basis 
and the maximally entangled basis \cite{We01}.
Although such "equi-entangled bases" 
have already appeared in the literature \cite{GL10,KM06},
the present construction is simpler
as it corresponds to straight lines
in the  Birkhoff polytope ${\mathcal B}_n$.

This work is organized as follows. In Section \ref{section:robust}
we review the necessary notions, introduce robust Hadamard matrices
 and demonstrate their existence if skew Hadamard matrices or symmetric conference matrices exist.
In Section \ref{section:robust_and_unistochasticity} we present the unistochasticity problem
and show how robust Hadamard matrices can be used to
prove unistochasticity of bistochastic matrices at a ray
joining the center of the Birkhoff polytope with any of its corners. The notion of complementary permutation
matrices is used in Section \ref{section:triangles} to
demonstrate unistochasticity of certain subsets of the 
Birkhoff polytope of an even dimension $n$ for which 
robust Hadamard matrices exist.
Unitary matrices of an even order $n\le 20$ 
corresponding to rays in the Birkhoff polytope
are applied to construct in Section \ref{section:equi} a family of equi-entangled bases
for the composite systems of size $n \times n$.
In Section \ref{section:set_dimension4} we analyze the unistochasticity problem for $n = 4$: using the Haagerup procedure, presented
in Appendix A,
we  investigate the geometry of the set $\mathcal{U}_4$ of unistochastic matrices of order $n = 4$.
An estimation of the relative volume of the set  $\mathcal{U}_4$  with respect to the standard Lebesgue measure is presented in Appendix B.
In Appendix C it is shown that robust Hadamard matrices do not exists for $n=3$ and $n=5$, 
while extensions of these results for any odd $n$, suggested by the referee, are presented in Appendix D.
	
\section{Robust Hadamard Matrices}\label{section:robust}
		We are going to discuss various subclasses of the set of Hadamard matrices. Let us start with basic definitions. Let $M_n(\mathbb{R})$ and $M_n(\mathbb{C})$ denote set of all real (complex) matrices of order $n$.
        
        In analogy to the standard notion of (real) Hadamard matrices
one defines \cite{Cr91,TZ06} 
complex Hadamard matrices.
	\begin{definition}\label{definition:hadamard}
	Matrices of order $n$ with unimodular entries, unitary up to a scaling factor,
		\begin{equation*}
			\mathcal{H}(n)=\{H \in M_n(\mathbb{C}) :\,(HH^{*}=n \mathbb{I},
|H_{ij}|= 1,  i,j=1,\dots,n )  \},
		\end{equation*}
		are called Hadamard matrices. If all entries of the matrix $H$ are real, it is called a real Hadamard.
	\end{definition}

	\begin{definition}\label{definition:robust}
		Set of robust Hadamard matrices for dimension $n$
		\begin{equation*}
			\mathcal{H}^{R}(n) = 
          \mathcal{f}H \in M_n(\mathbb{C}):\,\forall_{i,j\in1,2,...,n,\,i\neq j}
         \begin{pmatrix}H_{ii} & H_{ij}\\
			H_{ji} & H_{jj}
	\end{pmatrix}\in\mathcal{H}_{\mathbb{C}}(2)\}.
		\end{equation*}
	\end{definition}

The name refers to the fact that the Hadamard property 
is robust with respect to projections:
 $H^R$ remains Hadamard after a projection  $\Pi_2$
onto a subspace spanned by two vectors of the basis used,	
$\Pi_{2}H^{R}\Pi_{2} \in \mathcal{H} (2)$.

	Robust Hadamard matrices of order $n$ will be denoted by $H^{R}_{n}$ or $H^{R}$. 
Equivalent definition is:
	\begin{quote}
		Robust Hadamard matrix is a Hadamard matrix,
 in which all principal
minors of order two are extremal so that their modulus is equal to 2.
	\end{quote}
The notion of Hadamard matrices robust with respect to projection can also be used in the complex case. 

\subsection{Equivalence relations}
Let ${\mathcal P}(n)$ or $\mathcal{P}$ denote the set of all permutation matrices $P$ of order $n$. Permutation matrices are used to introduce   
the following equivalence relation in the set of Hadamard matrices.

	\begin{definition}
		Two Hadamard matrices are called \emph{equivalent}, written $H_1\sim H_2$, if there exist unitary diagonal matrices $D_{1},\,D_{2}$ and permutation matrices $P_{1},\,P_{2}\in\mathcal{P}$ such that 
		\begin{equation}
			H_{1} \sim  H_{2}=P_{1}D_{1}H_{1}D_{2}P_{2}.
            \label{eq:equi_relations1}
		\end{equation}
	\end{definition}

In dimensions $1, 2, 4, 8$ and $12$ all real Hadamard matrices are equivalent. For $n=16$ there exist five equivalence classes  \cite{Horadam07} and this number grows fast \cite{Or08} with $n$. 
It is also convenient to distinguish a finer notion
of equivalence with respect to signs.

	\begin{definition}
		Two Hadamard matrices are equivalent with respect to signs, 
written $H_1 \approx H_2$,
 if one is obtained from the other by negating rows or columns,
	\[
	H_{1} \approx  H_{2}=D_{1}H_{1}D_{2},
	\]
    where $D_{1},\,D_{2}$ are orthogonal diagonal matrices.
    \label{eq:equi_relations2}
	\end{definition}
	By definition, if $H_{1}  \approx  H_{2}$ then $H_{1} \sim H_{2}$. 

\subsection{Basic properties of robust Hadamard matrices}

Some basic properties of robust Hadamard matrices are: 
\begin{itemize}
	\item By construction every $H_2$ is robust; \; $\mathcal{H}(2)
 \equiv  \mathcal{H}^{R}(2)$.
	\item For $n \geq 4$ not every $H$ is robust; \;  $\mathcal{H}(n) \neq \mathcal{H}^{R}(n),\,n \geq 4$.
	\item If $H$ is robust then its transpose $H^T$ is also robust.
	\item Real robust matrices form sign-equivalence classes within the real Hadamard equivalence classes. Thus, for the orders with one equivalence class ($n=1,2,4,8,12$) all real robust matrices are equivalent.
\end{itemize}

	\subsection{Skew Hadamard and robust Hadamard matrices}
		\begin{definition}
			A real Hadamard matrix $H$ is a skew Hadamard matrix, written $H\in\mathcal{H}^{S}(n)$, if and only if: 
			\[
				H+H^{T}=2\mathbb{I}.
			\]
		\end{definition}
		\begin{lemma}
			Every skew Hadamard matrix is robust Hadamard.
            \label{lemma:all_robust}
		\end{lemma}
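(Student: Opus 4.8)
The plan is to verify the defining condition of robust Hadamard matrices directly: for a skew Hadamard matrix $H$ of order $n$, I must show that every $2\times 2$ principal submatrix $\begin{pmatrix}H_{ii} & H_{ij}\\ H_{ji} & H_{jj}\end{pmatrix}$ lies in $\mathcal{H}(2)$. First I would record what the skew condition $H+H^{T}=2\mathbb{I}$ tells us entrywise. On the diagonal it gives $2H_{ii}=2$, so $H_{ii}=1$ for all $i$; off the diagonal it gives $H_{ij}+H_{ji}=0$, so $H_{ji}=-H_{ij}$ for $i\neq j$. Since $H$ is a real Hadamard matrix, $H_{ij}\in\{+1,-1\}$, hence for $i\neq j$ one of $H_{ij},H_{ji}$ equals $+1$ and the other $-1$.

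Next I would simply exhibit the $2\times 2$ principal submatrix: for any $i\neq j$ it equals either $\begin{pmatrix}1 & 1\\ -1 & 1\end{pmatrix}$ or $\begin{pmatrix}1 & -1\\ 1 & 1\end{pmatrix}$, depending on the sign of $H_{ij}$. In either case all entries are unimodular (indeed $\pm 1$), and a direct check shows the rows are orthogonal: $(1)(\,-1)+(1)(1)=0$ in the first case, and similarly in the second. Equivalently, the determinant has modulus $|1\cdot 1 - H_{ij}H_{ji}| = |1-(-H_{ij}^2)| = |1+1| = 2$, which is the extremal value characterizing $H_2\in\mathcal{H}(2)$ as noted in the alternative definition above. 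Therefore every principal $2\times 2$ submatrix is a Hadamard matrix, and $H\in\mathcal{H}^{R}(n)$.

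I do not anticipate a genuine obstacle here; the statement is essentially immediate once the skew condition is read off entry by entry. The only point requiring a word of care is that the skew condition is what pins down the diagonal entries to be $1$ (not merely unimodular) and forces the antisymmetry $H_{ji}=-H_{ij}$ off the diagonal — both facts are needed, and both come for free from $H+H^{T}=2\mathbb{I}$ together with the real-Hadamard hypothesis $H_{ij}=\pm 1$. One might also remark that this argument shows more: a real matrix with $\pm 1$ entries satisfies the robust condition on a given pair $(i,j)$ precisely when $H_{ii}H_{jj}\neq H_{ij}H_{ji}$, and the skew structure guarantees this for all pairs at once.
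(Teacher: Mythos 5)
Your proof is correct and follows essentially the same route as the paper's: read off from $H+H^{T}=2\mathbb{I}$ that the diagonal entries equal $1$ and that off-diagonal pairs $H_{ij},H_{ji}$ have opposite signs, then conclude that every principal $2\times 2$ minor has determinant of modulus $2$. Your version merely spells out the entrywise verification in more detail.
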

		\begin{proof}
 	By definition, every diagonal element of a skew 
Hadamard matrix  $H$ is equal to $1$. Furthermore, any pair of off--diagonal elements, 
with the number of row and column exchanged, consist of two entries with the opposite sign.
Hence  the determinant of every principal 
minor of order two is equal to $2$, which means that $H$ is a robust Hadamard.
		\end{proof}
        
		\begin{lemma}\label{lemma:equiv}
	Any robust Hadamard matrix $H^{R}$ is sign-equivalent to a certain skew Hadamard
	matrix: $H^{S} \approx H^{R}$. 
		\end{lemma}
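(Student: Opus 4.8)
The plan is to reverse the recipe that makes a skew Hadamard matrix robust. Starting from a robust Hadamard matrix $H^R$, observe first that every diagonal entry $H^R_{ii}$ is an entry of a principal $2\times 2$ Hadamard block, hence unimodular; since the block $\begin{pmatrix}H^R_{ii}&H^R_{ij}\\ H^R_{ji}&H^R_{jj}\end{pmatrix}$ has modulus-$2$ determinant, a short computation shows each such block, after dividing rows and columns by the (unimodular) entries, is sign-equivalent to $\begin{pmatrix}1&1\\-1&1\end{pmatrix}$. In the real case the diagonal entries are $\pm1$, and I would first negate every row $i$ with $H^R_{ii}=-1$ (a left multiplication by an orthogonal diagonal matrix $D_1$); this forces all diagonal entries to equal $1$ without leaving the sign-equivalence class, and it does not spoil robustness because negating a row negates an entire principal-minor row, preserving $|\det|=2$ on every $2\times2$ block.

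The heart of the argument is then to show that once the diagonal is all-ones, a real robust Hadamard matrix is automatically skew, i.e. $H+H^T=2\mathbb{I}$, which amounts to proving $H_{ij}=-H_{ji}$ for all $i\neq j$. This is exactly the extremality condition: with $H_{ii}=H_{jj}=1$, the principal minor $H_{ii}H_{jj}-H_{ij}H_{ji}=1-H_{ij}H_{ji}$ has modulus $2$, so $H_{ij}H_{ji}=-1$, and since $H_{ij},H_{ji}\in\{\pm1\}$ this gives $H_{ij}=-H_{ji}$. Hence $D_1 H^R$ is already skew Hadamard, and $H^S:=D_1H^R \approx H^R$ as claimed. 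I would present this for the real case first, since the statement of Lemma \ref{lemma:all_robust} and the paper's running examples are real.

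For the general (complex) reading of the lemma one must work harder, because diagonalizing the blocks to $\begin{pmatrix}1&1\\-1&1\end{pmatrix}$ requires dividing the $i$-th row and column by $H_{ii}$, and these rescalings across different $i$'s must be globally consistent. The natural move is: multiply $H^R$ on the left by $D=\mathrm{diag}(\overline{H_{11}},\dots,\overline{H_{nn}})$ so that the diagonal becomes all $1$; then for $i\neq j$ the block extremality gives $|H_{ii}H_{jj}-H_{ij}H_{ji}|=2$ with all four entries unimodular, which (by the equality case of the triangle inequality) forces $H_{ij}H_{ji}=-H_{ii}H_{jj}$, i.e. after the rescaling the off-diagonal pair is genuinely skew-symmetric in sign. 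The main obstacle, and the step I expect to cost the most care, is exactly this consistency: checking that the single diagonal rescaling $D$ simultaneously normalizes every $2\times2$ block, so that no further column phases (which could re-break already-fixed blocks) are needed — in the real case this is immediate because left-negation of row $i$ is undone on the diagonal by also negating column $i$, but one should verify the bookkeeping carefully, and for the honestly complex case argue that the reduction lands in a real skew Hadamard matrix (which is the intended content, since skew Hadamard matrices are real by definition).
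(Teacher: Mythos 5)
Your proof is correct and takes essentially the same approach as the paper: the paper likewise negates the rows/columns carrying a negative diagonal entry and relies on the extremality of the principal $2\times 2$ minors to conclude skewness (you negate rows where the paper negates columns, an immaterial difference, and you spell out the step $H_{ij}H_{ji}=-1$ that the paper leaves implicit). Your remark that the statement must be read in the real setting --- since sign-equivalence cannot turn a genuinely complex matrix into a (real) skew Hadamard matrix --- is consistent with the paper, whose own proof is likewise phrased only for the real case.
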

\begin{proof}
To show this observe that by multiplying 
the columns of a robust Hadamard matrix $H^{R}$ which contain negative entries at the diagonal by $(-1)$ 
we obtain a skew matrix.
Note that multiplication of columns does not take $H^{R}$
out of the set of robust Hadamard matrices.
\end{proof}

	One can easily see that the following Hadamard matrix of order four is robust and is equivalent to a skew Hadamard:
		\[
			H^{R}_{4}=\begin{pmatrix}1 & 1 & 1 & 1\\
			1 & -1 & -1 & 1\\
			1 & 1 & -1 & -1\\
			1 & -1 & 1 & -1
			\end{pmatrix}
 \approx  
 H^{R,S}_{4} =
    \begin{pmatrix}1 & -1 & -1 & -1\\
			1 & 1 &  1 & -1\\
			1 & -1 & 1 & 1\\
			1 & 1 & -1 & 1
			\end{pmatrix}.
		\]

To show that the set of $H_2$-reducible matrices introduced by Karlsson \cite{karlson}
differs from the set $\mathcal{H}^{R}(n)$ of robust Hadamard matrices discussed here, consider
the following matrix of size $n=4$,
\[
\begin{pmatrix}
             1 & 1 & 1 & 1 \\
             1 & -1 & 1 & -1 \\
             1 & 1 & -1 & -1\\
             1 & -1 & -1 & 1\\
\end{pmatrix}.
\]
It is easy to see that the above matrix is
$H_2$-reducible but is not robust, so these two sets do differ.

\subsection{Symmetric conference matrices and robust Hadamard matrices}

\begin{definition}
	A symmetric matrix with entries $\pm 1$ outside the diagonal and $0$ at the diagonal,
     which satisfy  orthogonality relations:
	\begin{equation*}
		CC^{T} = (n-1) \mathbb{I},
	\end{equation*}
	is called a symmetric {\sl conference} matrix.
\end{definition}
	
	\begin{lemma}
		Matrices of the structure:
		\begin{equation*}
			H^R = C + i \mathbb{I},
		\end{equation*}
		where $C$ is symmetric conference matrix, are complex robust Hadamard matrices.
	\end{lemma}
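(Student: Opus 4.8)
The plan is to verify directly that $H^R = C + i\mathbb{I}$ satisfies both the Hadamard condition (Definition~\ref{definition:hadamard}) and the robustness condition (Definition~\ref{definition:robust}). First I would check the entry‑wise modulus condition: the diagonal entries are $C_{jj} + i = 0 + i = i$, which is unimodular, while each off‑diagonal entry is $C_{ij} + 0 = \pm 1$, also unimodular; hence all entries of $H^R$ have modulus $1$, as required.

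Next I would check the unitarity‑up‑to‑scaling condition $H^R (H^R)^* = n\mathbb{I}$. Since $C$ is real and symmetric, $(H^R)^* = (C + i\mathbb{I})^* = C^T - i\mathbb{I} = C - i\mathbb{I}$. Therefore
\begin{equation*}
H^R (H^R)^* = (C + i\mathbb{I})(C - i\mathbb{I}) = C C^T - iC + iC + \mathbb{I} = C C^T + \mathbb{I} = (n-1)\mathbb{I} + \mathbb{I} = n\mathbb{I},
\end{equation*}
using $CC^T = (n-1)\mathbb{I}$ and $C = C^T$. So $H^R \in \mathcal{H}(n)$.

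Finally I would verify robustness by examining an arbitrary principal minor of order two. For $i \neq j$ the relevant submatrix is
\begin{equation*}
\begin{pmatrix} H^R_{ii} & H^R_{ij} \\ H^R_{ji} & H^R_{jj} \end{pmatrix} = \begin{pmatrix} i & C_{ij} \\ C_{ji} & i \end{pmatrix},
\end{equation*}
whose entries are all unimodular, and whose determinant is $i^2 - C_{ij}C_{ji} = -1 - C_{ij}^2 = -1 - 1 = -2$ (since $C$ is symmetric with $\pm 1$ off‑diagonal entries, so $C_{ij}C_{ji} = C_{ij}^2 = 1$). A $2\times 2$ matrix with unimodular entries and determinant of modulus $2$ is necessarily an element of $\mathcal{H}(2)$ (equivalently, such a matrix is automatically orthogonal up to scaling, which one checks directly), so every principal $2\times 2$ submatrix of $H^R$ lies in $\mathcal{H}(2)$. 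This establishes $H^R \in \mathcal{H}^R(n)$.

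I do not anticipate a genuine obstacle here; the only point requiring a moment's care is the small lemma that a $2\times 2$ matrix with unimodular entries and $|\det| = 2$ is a complex Hadamard matrix — this follows because for such a matrix the rows automatically have equal norm $\sqrt{2}$ and the Gram determinant forces orthogonality, matching the characterization of $\mathcal{H}(2)$ recorded after Definition~\ref{definition:robust} (``all principal minors of order two are extremal so that their modulus is equal to 2''). Everything else is a direct substitution using symmetry of $C$ and the defining relation $CC^T = (n-1)\mathbb{I}$.
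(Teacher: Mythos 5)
Your proposal is correct and follows the same route as the paper, whose entire proof is the one-line observation that every principal $2\times 2$ submatrix of $H^R$ has determinant $-2$. You simply make explicit the steps the paper leaves implicit — the unimodularity of the entries, the computation $H^R(H^R)^*=CC^T+\mathbb{I}=n\mathbb{I}$, and the fact that a $2\times 2$ matrix with unimodular entries and determinant of modulus $2$ is automatically in $\mathcal{H}(2)$ — all of which check out.
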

    This statement holds as the determinant of any principal submatrix of $H^R$ or order two is equal to $-2$.
			
 To establish  whether for a given order $n$ there exists a robust Hadamard matrix
it is therefore sufficient to find a skew Hadamard matrix or a 
symmetric conference matrix of size $n$.
Concerning odd dimensions we show in Appendix C and D that robust Hadamard matrices do not exist
for any odd $n$.

\section{Robust Hadamard matrices and unistochasticity}\label{section:robust_and_unistochasticity}

Robust Hadamard matrices, introduced in the previous 
Section, will be used to 
investigate the problem, whether a given  bistochastic matrix is unistochastic. 
Before demonstrating such an application let us recall the necessary notions.

	\subsection{Definitions}

	\begin{definition}\label{def:Birkhoff-polytope-of}
	The set $\mathcal{B}_{n}$ of bistochastic matrices of size $n$ consists 
of matrices with non-negative entries which satisfy two sum conditions:

 $$\mathcal{B}_{n}=\{B\in M_{n}(\mathbb{R}), \
\sum_{i=1}^{n}B_{ij}=\sum_{i=1}^{n}B_{ji}=1,\ j=1,\dots,n   \}$$.
		\end{definition}
		Due to the Birkhoff theorem \cite{Bi46}, 
  the set $\mathcal{B}_{n}$ is equal to the
 convex hull of all permutation matrices of order $n$. 
		
		\begin{definition}\label{def:unistochastic}
	A bistochastic matrix $B\in\mathcal{B}_{n}$ is called unistochastic if and only 
   if there exists a unitary matrix $U\in U(n)$, $UU^{*} = \mathbb{I}$, such that
	\[
		B_{ij}=|U_{ij}|^{2},\ i,j=1,2,\dots,n
	\]
\end{definition}		
Using the notion of the Hadamard product $\circ$ of two matrices 
one can say that $B$ is unistochastic, 
if there exist a unitary $U$ such that $B=U \circ {\bar U}$.

If matrix $U$ is orthogonal the matrix $B$ is called {\sl orthostochastic}.
		The sets of all unistochastic and orthostochastic matrices
 of order $n$ will be denoted by $\mathcal{U}_{n}$ and $\mathcal{O}_{n}$, respectively.
 Above definitions imply:

		\[
		\mathcal{O}_{n}\subset\mathcal{U}_{n}\subset\mathcal{B}_{n}.
		\]
        
\subsection{In search for unistochasticity}
			It is easy to see that for $n=2$ all three sets coincide, 
$\mathcal{O}_{2} = \mathcal{U}_{2} = \mathcal{B}_{2}$. 
While conditions for unistochasticity are known \cite{AYP79,Di06,DZ09} for $n=3$, 
the case $n \geq 4$ remains open -- see  \cite{BEKTZ05}.
Given an arbitrary bistochastic matrix $B$ of order $n$, 
it is easy to verify whether certain necessary conditions for unistochasticity are satisfied,
but in general, no universal sufficient conditions are known
and one has to rely on numerical techniques \cite{BEKTZ05,GT17}.

Since we are looking for a unitary $U$ such that $B=U \circ {\bar U}$,
the absolute value of each entry is fixed, $|U_{ij}|=\sqrt{B_{ij}}$,
and one can investigate constraints implied by the unitarity.
The scalar product of any two different columns of $U$
is given by a sum of $n$ complex numbers of the form $U_{ij}{\bar U}_{ij'}$.
This sum can vanish if the largest modulus is not larger than
the half of the sum of all $n$ moduli. For $n=3$ this condition
is equivalent to the triangle inequality.  
The above observation, allows one 
to obtain a set of necessary conditions, a bistochastic $B$ must satisfy to
be unistochastic \cite{PZK01},
related to orthogonality of columns,
\begin{equation}
\max_{m=1,\ldots,n} \sqrt{B_{mk}B_{ml}}~\le ~ \frac{1}{2}
 \sum_{j=1}^n \sqrt{B_{jk}B_{jl}} \ ,{\rm ~~for ~~ all ~~} 1\leq k<l\leq n,
\label{chain_general1}
\end{equation}
and rows of $U$,
\begin{equation}
\max_{m=1,\ldots,n} \sqrt{B_{km}B_{lm}}~\le ~ \frac{1}{2}
 \sum_{j=1}^n \sqrt{B_{kj}B_{lj}} \ , {\rm ~~for ~~ all ~~} 1\leq k<l\leq n \ .
\label{chain_general2}
\end{equation}

We shall refer to the above relations as polygon inequalities or "chain" conditions: 
the longest link of a closed chain cannot be longer than the sum of all other links.
For matrices of size $n=3$ these necessary conditions are known
to be sufficient for unistochasticity \cite{AYP79,Na96},
while $B$ is orthostochastic if the bounds are saturated.
Furthermore, in this dimension all chain conditions are equivalent --
if inequality (\ref{chain_general1})  is satisfied for any pair of two columns 
it will also be satisfied  for the remaining 
pairs of columns and vectors \cite{JS88,Di06}.
The case $n=4$ occurs to be more complicated
as the chain inequalities are not sufficient \cite{AMM91,BEKTZ05},
and examples of bistochastic matrices are known
for which these condition are satisfied for some pairs of columns or rows 
and not satisfied by the other pairs \cite{Pa02,ZKSS03}. 
An algorithm allowing one to establish numerically
whether a given bistochastic matrix $B$ of order $n=4$ is unistochastic
is described in the Appendix A.

Not being able to solve the unistochasticity problem in its full extent, we shall consider 
particular subsets of the Birkhoff polytope $\mathcal{B}_{n}$ of an arbitrary dimension $n$. 
Figure 1 presents a sketch of the set of bistochastic matrices visualizing the problems considered. 
	
		\begin{figure}[H]
				\centering
	\includegraphics[scale=0.37]{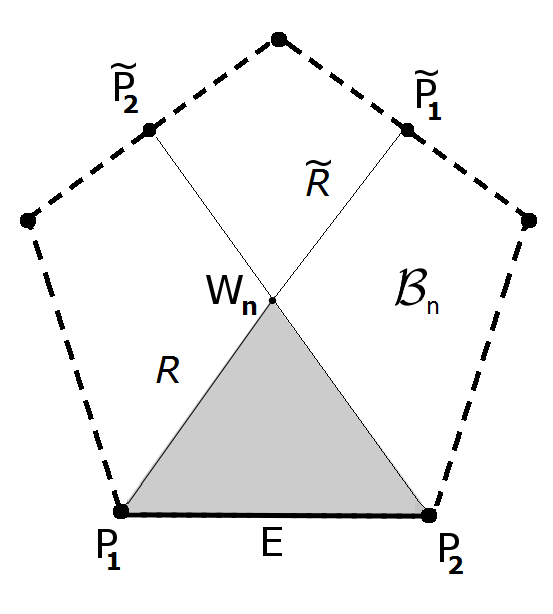}
\caption{Sketch of the Birkhoff polytope $\mathcal{B}_{n}$ - a set of ${(n-1)^2}$ dimensions: 
the flat matrix $W_n$ in the center, permutation matrices $P_{i}$ at the corners.
The corresponding rays $\mathcal{R}$  
and counter-rays  $\widetilde{\mathcal{R}}$  meet at $W_n$. The counter-ray 
 $\widetilde{\mathcal{R}}$ ends at the counter-permutation matrix $\tilde{P}_1$
 Bold lines denote complementary edges, while dashed lines represent non-complementary edges.
Region in gray -- the triangle $\Delta P_1, P_2, W_n$ -- represents sets proved to be unistochastic.}
\label{Fig1}
		\end{figure}
		
\begin{definition}
	Bistochastic matrix $W_n$, in which every element is equal to $1/n$, is called the flat matrix.
\end{definition}
\begin{definition}
	A one-dimensional set of bistochastic matrices 
obtained by a convex combination of the flat matrix $W_n$ and any permutation matrix $P$ is 
called a {\sl bistochastic ray},
\begin{equation}
\mathcal{R}_{n}=\{R_{\alpha}\in\mathcal{B}_{n}:\,R_{\alpha}= \alpha P +(1-\alpha) W_n,\ P\in\mathcal{P}(n), \ \alpha \in[0,1] \}.
\label{def_ray}
\end{equation}
\end{definition}
\begin{definition}
	A set $\widetilde{\mathcal{R}}_{n}$ of bistochastic matrices belonging to the line 
joining the flat matrix \emph{$W_n$ } and a permutation matrix $P$, outside the segment $PW_n$ is called a counter-ray. 
 These matrices can be expressed as pseudo-mixtures 
(\ref{def_ray}) with a negative weight $\alpha\in [-\frac{1}{n-1},0)$.
\end{definition}
For $P=\mathbb{I}$, the family of matrices belonging to the ray is as follows
\begin{equation}
	\mathcal{R}_{\mathbb{I}}=
    \Bigg \{
    R_{\alpha} = 
    \begin{pmatrix}a & b & \ldots & b & b\\
	b & a & \ldots & b & b\\
	\vdots & \vdots & \ddots & \vdots & \vdots\\
	b & b & \ldots & a & b\\
	b & b & \ldots & b & a
	\end{pmatrix} ,
    a=\frac{(n-1)\alpha+1}{n}; \text{ } b=\frac{1-a}{n-1};\text{   } \alpha \in[0,1]
    \Bigg\}.
    \label{eq:bista}
\end{equation}

\subsection{Robust Hadamard matrices imply unistochasticity}
	
	\begin{lemma}
If there exists a robust complex Hadamard matrix  $H^{R} $ of order $n$
then all rays and counter-rays of the Birkhoff polytope $\mathcal{B}_{n}$ of order $n$ are unistochastic.
	\end{lemma}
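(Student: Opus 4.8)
The plan is to exploit the block structure of both the ray matrices and the robust Hadamard matrix $H^R$. Since $n$ is even (a robust Hadamard matrix exists only in even dimension, as $H^R \approx H^S$ and skew Hadamards have even order, or $H^R = C+i\mathbb{I}$ with $C$ a conference matrix which also forces $n$ even), it suffices by the equivalence relations to prove unistochasticity for a single representative permutation on each orbit; moreover, multiplying $U$ by permutation matrices on either side permutes rows and columns of $B=U\circ\bar U$, and permutations act transitively enough that one can reduce to the case $P=\mathbb{I}$, i.e. to the matrix $R_\alpha$ of~\eqref{eq:bista}. So the first step is this reduction: show that if every ray through $\mathbb{I}$ is unistochastic then every ray and counter-ray is, using $B \mapsto P_1 B P_2$ and the fact that $|(P_1 U P_2)_{ij}|^2 = B_{ij}$ whenever $|U_{ij}|^2$ realizes $P_1^{-1} B P_2^{-1}$.

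The core step is an explicit construction of $U$ for $R_\alpha$. Following the Haagerup-type idea advertised in the introduction, I would write $R_\alpha$ in terms of a single off-diagonal parameter $b$ and diagonal parameter $a$, and look for $U$ of the form $U = D_1\, U(t)\, D_2$ where $U(t)$ is built from $H^R$ by the rule: replace each entry $H^R_{ij}$ by $\sqrt{b}\cdot H^R_{ij}$ for $i\ne j$ and by (something of modulus $\sqrt a$) on the diagonal, then rescale. Concretely, consider the matrix $V(t) = \sqrt{b}\, H^R + (\text{diagonal correction})$; because every principal $2\times 2$ minor of $H^R$ is a genuine $H_2$, the orthogonality of a pair of rows $k,l$ of $V(t)$ reduces, after separating the two "special" positions (columns $k$ and $l$) from the remaining $n-2$ positions, to a single scalar equation in $t$. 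The robustness is exactly what guarantees that the contribution of columns $k$ and $l$ to $\langle \text{row}_k, \text{row}_l\rangle$ is controlled by the $2\times2$ Hadamard block $\begin{pmatrix}H^R_{kk}&H^R_{kl}\\ H^R_{lk}&H^R_{ll}\end{pmatrix}$, whose entries satisfy $H^R_{kk}\overline{H^R_{lk}} + H^R_{kl}\overline{H^R_{ll}} = 0$. One then checks that the remaining $n-2$ terms, all equal in modulus to $b$ times a phase coming from $H^R$, sum to the van-der-Waerden-type constant independent of the pair $(k,l)$, so a single choice of the diagonal phase/modulus balances the equation simultaneously for all pairs. The parameter $\alpha\in[-\tfrac{1}{n-1},1]$ enters through $a = \frac{(n-1)\alpha+1}{n}$, $b=\frac{1-a}{n-1}$, and one verifies the construction stays unitary (entries of the right modulus, columns normalized) across the whole interval, which covers both the ray and the counter-ray.

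The main obstacle I anticipate is verifying that one genuinely has enough free phases to kill \emph{all} $\binom{n}{2}$ row-orthogonality conditions and $\binom{n}{2}$ column-orthogonality conditions at once; naively there are $O(n^2)$ constraints and only $O(n)$ diagonal parameters, so the argument must show that robustness plus the circulant-like symmetry of $R_\alpha$ collapses these to essentially one equation. I expect this to work because (i) the row and column conditions are related by transposition and $H^R$ robust $\Rightarrow (H^R)^T$ robust, and (ii) the symmetry group of $R_{\mathbb{I}}$ (conjugation by any permutation fixing $\mathbb{I}$, i.e. all of $S_n$ acting simultaneously on rows and columns) acts transitively on pairs $(k,l)$, so one orthogonality equation implies the rest. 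The secondary point to be careful about is the endpoints: at $\alpha=1$ we need $U$ to degenerate to a permutation matrix (so $b\to 0$, diagonal dominates) and at $\alpha = -\tfrac{1}{n-1}$ the matrix $R_\alpha$ has a zero diagonal and $U$ should degenerate to something like $H^R/\sqrt{n-1}$ restricted appropriately — checking continuity and that the modulus conditions remain satisfiable at these extreme values is where a careless construction would break.
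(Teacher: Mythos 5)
Your construction is essentially the paper's: the paper takes $U=\sqrt{a}\,D+\sqrt{b}\,(H^{R}-D)$ with $D$ the diagonal part of $H^{R}$, and the $2\times 2$-block relation you isolate, $H_{kk}\overline{H_{lk}}+H_{kl}\overline{H_{ll}}=0$, is exactly the identity $H^{R}D^{*}+D(H^{R})^{*}=2\mathbb{I}$ that the paper derives from robustness before verifying $UU^{*}=((n-1)b+a)\,\mathbb{I}=\mathbb{I}$. The obstacle you anticipate dissolves: no balancing of a diagonal phase against a "van-der-Waerden constant" is needed, since for each pair of rows the two special terms vanish by that identity and the remaining $n-2$ terms then vanish by the Hadamard orthogonality of $H^{R}$ itself, so all $\binom{n}{2}$ conditions hold automatically (and the column conditions follow since $UU^{*}=\mathbb{I}$ already implies unitarity).
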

	\begin{proof}
	First let us show, that for any robust Hadamard matrix ${H}^{R}$ the following property holds:
	\begin{equation}
		{H}^{R}{D}^{*}+D{{H}^{R}}^{*}=2\mathbb{I},
	\label{eq:sskew}
	\end{equation}
	where $D$ is the diagonal matrix containing diagonal entries of $H^{R}$. Diagonal elements of the left-hand side of eq.~(\ref{eq:sskew}) are equal to $2H_{ii}H_{ii}^{*}=2$, while off-diagonal entries of this sum, $i \neq j$, 
	     read $H_{ij}{H_{jj}}^{*}+{H_{ji}}^{*}{H}_{ii}$.
    In order to evaluate these terms we shall use the fact that the matrix ${H}^{R}{D}^{*} \sim {H}^{R}$
    is robust.  
     Let $M_2$ be the principal submatrix of ${H}^{R}{D}^{*}$ of order two spanned by the rows $i$ and $j$. 
     Since ${H}^{R}{D}^{*}$ is robust then  
     $M_2 \in \mathcal{H}_{\mathbb{C}}(2)$,  so that $M_2M_2^*=2\mathbb{I}_{2}$. 
     Writing down  the entries of this matrix explicitly one obtains
     	\begin{equation*}
		 M_{2}{M_{2}}^{*}=
         \begin{pmatrix}2 & H_{ij}{H_{jj}}^{*}+{H_{ji}}^{*}{H}_{ii}\\
                  H_{ij}^*{H_{jj}}+{H_{ji}}{H}_{ii}^* & 2
		\end{pmatrix} .
	\end{equation*}
	 Since this matrix is proportional to identity its off-diagonal entries vanish, 
	  so that    $H_{ij}{H_{jj}}^{*}+{H_{ji}}^{*}{H}_{ii}=0,$ for $i \neq j$. 
	  Hence equation (\ref{eq:sskew}) is shown to be true.
	
	For any bistochastic matrix $R$ of order $n$ belonging to the ray $\mathcal{R}_{\mathbb{I}}$ 
	or counter ray $\widetilde{\mathcal{R}}$  let us now construct a matrix $U$,
	\begin{equation*}
		U=\sqrt{a}D+\sqrt{b}(H^{R}-D),
	\end{equation*}
	 where real parameters $a$ and $b$ defined by eq.~(\ref{eq:bista}) 
	  satisfy the condition $(n-1)b+a=1$.
	Making use of these relations,  definition ~(\ref{definition:hadamard}) and eq.~(\ref{eq:sskew}) 
	  we get  
	\begin{gather*}
		UU^{*}= (\sqrt{b}H^{R}-(\sqrt{b}-\sqrt{a})D)(\sqrt{b}{H^{R}}^{*}-(\sqrt{b}-\sqrt{a}){D}^{*})= \\
		bH^{R}{H^{R}}^{*}-\sqrt{b}(\sqrt{b}-\sqrt{a})(H^{R}{D}^{*}+D{H^{R}}^{*})+(\sqrt{b}-\sqrt{a})^{2}D{D}^{*}=((n-1)b+a)\mathbb{I}=\mathbb{I}.
	\end{gather*}
	   This shows that the matrix $U$ is unitary.
	   Since the matrix $R$ satisfies the relation $R_{ij}=|U_{ij}|^{2}$, it is unistochastic.
	     Hence any matrix  $R$ at any ray $\mathcal{R}$ of the Birkhoff polytope $\mathcal{B}_{n}$ 
	     or any counter-ray   $\widetilde{\mathcal{R}}$   is unistochastic 
	      for any dimension $n$, for which a robust Hadamard matrix exists.
\end{proof}
    
    In particular, if the robust Hadamard matrix is real, so that $U$ becomes orthogonal, then the matrix $R$ is orthostochastic. Since every skew Hadamard matrix is robust (lemma \ref{lemma:all_robust}), we arrive at the following statements:
    
	\begin{proposition}\label{proposition:sym}
For every order $n$, for which there exists a symmetric conference matrix, 
every matrix belonging to any ray $\mathcal{R}$ or any counter-ray $\widetilde{\mathcal{R}}$ of the Birkhoff polytope $\mathcal{B}_{n}$  is unistochastic.
	\end{proposition}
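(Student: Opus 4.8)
The plan is to obtain the statement as an immediate consequence of two lemmas established above. First, for an order $n$ that admits a symmetric conference matrix $C$, I would set $H^R = C + i\mathbb{I}$; by the lemma stating that $C + i\mathbb{I}$ is a complex robust Hadamard matrix whenever $C$ is a symmetric conference matrix, such an $H^R$ lies in $\mathcal{H}^{R}(n)$, so a robust complex Hadamard matrix of order $n$ exists. Then I would invoke the lemma asserting that the existence of a robust complex Hadamard matrix of order $n$ makes every ray $\mathcal{R}$ and every counter-ray $\widetilde{\mathcal{R}}$ of $\mathcal{B}_{n}$ unistochastic, which is exactly the claim. Concretely, the unitary realizing the ray $\mathcal{R}_{\mathbb{I}}$ is $U = \sqrt{a}\,D + \sqrt{b}\,(H^R - D)$ with $D = i\mathbb{I}$, $a = \frac{(n-1)\alpha+1}{n}$ and $b = \frac{1-a}{n-1}$.

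The only point that merits spelling out is the passage from the ray through the identity permutation to a ray through an arbitrary permutation $P$, since the explicit construction in the proof of that lemma is written for $\mathcal{R}_{\mathbb{I}}$. Here I would observe that left multiplication by the permutation matrix $P$ is unitary and permutes the rows of any matrix, so if $B_{ij} = |U_{ij}|^{2}$ then $|(PU)_{ij}|^{2}$ are the entries of the row-permuted matrix $PB$; since the flat matrix $W_n$ is fixed by every row permutation while $\mathbb{I}$ is carried to $P$, the unitary $PU$ realizes the generic ray $\mathcal{R}_P = \{\alpha P + (1-\alpha)W_n\}$. Counter-rays need nothing extra: they are precisely the members of this family with $\alpha \in [-\frac{1}{n-1},0)$, and on that range $a$ and $b$ remain non-negative, so $\sqrt{a}$ and $\sqrt{b}$ are real and the verification $UU^{*} = \mathbb{I}$ runs unchanged.

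Since for a conference matrix the robust Hadamard $C + i\mathbb{I}$ is genuinely complex, the matrix $U$ is unitary but not real, so the conclusion is unistochasticity rather than orthostochasticity --- which is why the statement is phrased as it is. I expect no serious obstacle here, as all the content is carried by the two lemmas; the only care needed is the bookkeeping that extends the identity-ray construction to all permutations and to the counter-ray parameter range $\alpha \in [-\frac{1}{n-1},0)$.
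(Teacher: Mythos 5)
Your proposal is correct and follows essentially the same route as the paper: the proposition is stated there as an immediate corollary of the lemma that $C+i\mathbb{I}$ is a complex robust Hadamard matrix together with the lemma that existence of a robust Hadamard matrix of order $n$ makes all rays and counter-rays of $\mathcal{B}_n$ unistochastic. Your extra bookkeeping (left-multiplication by $P$ to move from $\mathcal{R}_{\mathbb{I}}$ to a general ray, and checking $a,b\ge 0$ on $\alpha\in[-\tfrac{1}{n-1},0)$) is accurate and only makes explicit what the paper leaves implicit.
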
    
\begin{proposition}
\label{proposition:skeww}
For any order $n$, for which there exists a skew Hadamard matrix $H^{S}$,
every matrix belonging to any ray $\mathcal{R}$ or any counter-ray $\widetilde{\mathcal{R}}$ 
of the Birkhoff polytope $\mathcal{B}_{n}$ is orthostochastic.
		\end{proposition}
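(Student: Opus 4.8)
The plan is to deduce Proposition \ref{proposition:skeww} as an immediate specialization of the preceding Lemma together with Lemma \ref{lemma:all_robust}. The starting point is the observation, already made in the remark following the proof of the ray-unistochasticity Lemma, that when the robust Hadamard matrix $H^R$ chosen in that construction has real entries, the unitary $U=\sqrt{a}D+\sqrt{b}(H^R-D)$ is itself a real matrix, hence orthogonal; consequently the resulting bistochastic matrix $R$ with $R_{ij}=|U_{ij}|^2=U_{ij}^2$ is not merely unistochastic but orthostochastic. So the entire content of the proposition reduces to: if a skew Hadamard matrix $H^S$ of order $n$ exists, then there exists a \emph{real} robust Hadamard matrix of order $n$ to feed into that construction.

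The key step is therefore to invoke Lemma \ref{lemma:all_robust}, which states that every skew Hadamard matrix is robust Hadamard. Since $H^S$ is by definition a real matrix, it is a real robust Hadamard matrix of order $n$. One small point to check is that the argument in the proof of the ray Lemma is applied with $P=\mathbb{I}$, so a priori it only handles the ray and counter-ray through the identity permutation; to cover an arbitrary permutation matrix $P$ one composes with a permutation of rows or columns. Concretely, if $R_\alpha=\alpha P+(1-\alpha)W_n$ lies on a general ray, then $P^{-1}R_\alpha = \alpha\mathbb{I}+(1-\alpha)W_n$ lies on the ray through $\mathbb{I}$ (using $P^{-1}W_n=W_n$), so it is orthostochastic with a witnessing orthogonal $U$; then $PU$ is orthogonal and witnesses orthostochasticity of $R_\alpha$. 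The same permutation trick applies to counter-rays, where $\alpha$ runs over $[-\tfrac{1}{n-1},0)$ instead of $[0,1]$ — the algebra in the proof of the ray Lemma never used the sign of $\alpha$, only the relation $(n-1)b+a=1$ and the nonnegativity of $a,b$ needed to take square roots, and on a counter-ray $a$ and $b$ remain nonnegative.

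I would write this up in three or four sentences: first reduce to producing a real robust Hadamard matrix; then cite Lemma \ref{lemma:all_robust} to get one from $H^S$; then note that the construction in the proof of the ray Lemma yields an orthogonal $U$ in the real case, so $R$ is orthostochastic; and finally handle an arbitrary permutation matrix $P$ by conjugating with $P$ as above. I do not expect any genuine obstacle here — the statement is essentially a corollary, and the only thing requiring a word of care is making the $P$-dependence explicit, since the detailed computation in the Lemma was carried out only for $\mathcal{R}_{\mathbb{I}}$. Everything else is a direct appeal to results already proved in this section.
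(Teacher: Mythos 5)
Your proposal is correct and follows exactly the paper's route: the authors derive this proposition as an immediate corollary of the ray-unistochasticity Lemma combined with Lemma \ref{lemma:all_robust}, noting in the remark preceding the statement that a real robust Hadamard matrix makes $U$ orthogonal and hence $R$ orthostochastic. Your extra care in making the reduction from a general permutation $P$ to $P=\mathbb{I}$ explicit, and in checking that $a,b\ge 0$ on the counter-ray, only fills in details the paper leaves implicit.
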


Existence of skew Hadamard matrices for orders $n=4k$ is proved for $k<69$, proper construction was done by Paley. There are infinitely many cases of skew Hadamard matrices of higher orders \cite{skew}.

It is known that for dimensions $n=6,10,14,18$ there exists a symmetric conference matrix \cite{sym}. However, 
for order $n=22$ there are no such matrices, since $21$ is not the sum of two squares. 

	Those facts imply the main result of this work:
\medskip
	\begin{theorem}
		For any even order $n\leq 20$ all rays and counter-rays of the Birkhoff polytope $\mathcal{B}_{n}$ are 
		\begin{enumerate}[a)]
			\item orthostochastic (for $n=2,4,8,12,16,20$) or 
			\item unistochastic (for $n=6,10,14,18$).
		\end{enumerate}
	\end{theorem}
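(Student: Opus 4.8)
The plan is to combine the earlier lemmas with the known existence results for skew Hadamard and symmetric conference matrices in the relevant small dimensions. First I would recall the engine of the argument: by the Lemma ``Robust Hadamard matrices imply unistochasticity,'' the existence of a robust complex Hadamard matrix of order $n$ already forces every ray and counter-ray of $\mathcal{B}_n$ to be unistochastic, and by the closing remark of that proof, if the robust Hadamard matrix can be taken real (hence orthogonal up to scaling), the stronger conclusion of orthostochasticity follows. So the whole statement reduces to exhibiting, for each even $n\le 20$, either a real robust Hadamard matrix (for part (a)) or at least a complex one (for part (b)). By Lemma~\ref{lemma:all_robust} a skew Hadamard matrix is real robust, and by the ``$H^R=C+i\mathbb{I}$'' Lemma a symmetric conference matrix yields a complex robust Hadamard matrix; thus it suffices to track down which of these two structured objects exists in each dimension.

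Next I would split into the two cases according to $n \bmod 4$. For $n\in\{2,4,8,12,16,20\}$ I would point out that these are all of the form $4k$ (with $n=2$ handled trivially, or by noting $\mathcal{O}_2=\mathcal{U}_2=\mathcal{B}_2$), and invoke the cited fact that skew Hadamard matrices exist for all orders $4k$ with $k$ in the relevant range (Paley's construction covers $n=4,8,12,20$ via $n-1$ prime, and $n=16$ by a direct/doubling construction). Feeding any such skew Hadamard matrix into Proposition~\ref{proposition:skeww} gives orthostochasticity of every ray and counter-ray, which is part (a). For $n\in\{6,10,14,18\}$ I would invoke the cited fact that a symmetric conference matrix exists in each of these dimensions (Paley conference matrices, available since $n-1\in\{5,9,13,17\}$ is a prime power $\equiv 1\pmod 4$); feeding it into Proposition~\ref{proposition:sym} gives unistochasticity of every ray and counter-ray, which is part (b). Assembling the two cases covers every even $n$ with $2\le n\le 20$, since each such $n$ is either $\equiv 0$ or $\equiv 2 \pmod 4$.

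Finally I would remark on completeness and on why the bound $n\le 20$ is the natural stopping point: the next even dimension $n=22$ is exactly the first one that escapes both constructions, since there is no symmetric conference matrix of order $22$ (as $21$ is not a sum of two squares) and $22$ is not divisible by $4$, so the skew Hadamard route is unavailable as well. This is not part of the proof of the theorem but explains the phrasing of its hypothesis.

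The only real content is bookkeeping — matching each of the ten even dimensions $\le 20$ to the correct structured matrix — so I do not expect a genuine obstacle; the main point requiring care is simply making sure the existence claims for skew Hadamard matrices of orders $4,8,12,16,20$ and for symmetric conference matrices of orders $6,10,14,18$ are each correctly attributed to the cited sources, and that $n=2$ is not silently dropped.
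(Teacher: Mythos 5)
Your proposal is correct and follows essentially the same route as the paper: the theorem is obtained by feeding the known existence of skew Hadamard matrices (orders $2,4,8,12,16,20$) into Proposition~\ref{proposition:skeww} and of symmetric conference matrices (orders $6,10,14,18$) into Proposition~\ref{proposition:sym}. Your extra bookkeeping about $n=2$ and the remark on $n=22$ match the paper's surrounding discussion.
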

\medskip
	\begin{proof}
		It is a simple conclusion from proposition \ref{proposition:sym} and proposition {\ref{proposition:skeww}} which allow one for an explicit construction of the 
corresponding orthogonal and unitary matrices.
	\end{proof}
    Note that the above statement holds also for infinitely many dimensions, for which symmetric conference matrices are known (first found by Paley -- see e.g. \cite{Horadam07}).

\section{Unistochasticity of certain triangles embedded inside Birkhoff polytope}\label{section:triangles}
			A convex combination of any two permutation matrices forms an edge (or a diagonal) of the Birkhoff polytope $\mathcal{B}_n$,
			\begin{equation*}	
				E = \alpha P + (1-\alpha) Q
			\end{equation*}
		Au-Yeng and Cheng introduced the notion of complementary permutations {\cite{convex}}.

\begin{definition}
	Let $P = (P_{ij})$ and $Q = (Q_{ij})$ be two permutation matrices. 
   Then the matrices  $P$ and $Q$ are called {\sl complementary} if equality 
$P_{ij} = P_{hk} = Q_{ik} = 1$  implies $Q_{hj} = 1$  for all $i, j, h, k \in \{1,...,n\}$;
(consequently, if $Q_{ij} = Q_{hk} = P_{ik} = 1$ then $P_{hj} = 1$).
\label{def_comp}
	\end{definition}
		\begin{proposition}
If $P$ and $Q$ are two complementary permutation matrices
then the entire edge $PQ$ of the Birkhoff polytope $\mathcal{B}_n$ connecting $P$ and $Q$ is orthostochastic. 
If $P$ and $Q$ are not complementary, then the edge is not unistochastic, 
   besides the orthostochastic corners $P$ and $Q$.
		\end{proposition}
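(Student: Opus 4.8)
The plan is to analyze the edge $E_\alpha=\alpha P+(1-\alpha)Q$ by reducing to the case $Q=\mathbb{I}$. After multiplying both $P$ and $Q$ on the left by $Q^{-1}=Q^{T}$, the edge is mapped to $\alpha Q^{T}P+(1-\alpha)\mathbb{I}$, and since left-multiplication by a permutation matrix preserves both unistochasticity and orthostochasticity (it corresponds to permuting the rows of the candidate $U$) and also preserves the complementarity relation, we may assume $Q=\mathbb{I}$ and $P=\sigma$ for some permutation $\sigma$. I would first unpack Definition~\ref{def_comp} in this normalized setting: $P$ is complementary to $\mathbb{I}$ precisely when $\sigma$ is an involution without fixed points, i.e.\ $\sigma^{2}=\mathbb{I}$ and $\sigma(i)\neq i$ for all $i$. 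Indeed, taking $Q=\mathbb{I}$, the hypothesis $P_{ij}=P_{hk}=Q_{ik}=1$ forces $i=k$, $j=\sigma^{-1}(i)$, $h=\sigma^{-1}(k)=\sigma^{-1}(i)$, and then $Q_{hj}=1$ means $h=j$, i.e.\ $\sigma^{-1}(i)=\sigma^{-1}(i)$ trivially — so I need to be careful here and instead run the condition in the form that actually bites: $P_{ij}=1$ and $Q_{ik}=1$ with $i\ne$ forcing a pairing. The cleaner route is to observe that $P$ complementary to $\mathbb{I}$ means the permutation $\sigma$ and its transpose edge decompose the index set into $2$-cycles (and fixed points), so $E_\alpha$ is block-diagonal, up to simultaneous row-and-column permutation, with $1\times1$ blocks equal to $1$ (at fixed points of $\sigma$) and $2\times2$ blocks of the form $\left(\begin{smallmatrix}1-\alpha&\alpha\\\alpha&1-\alpha\end{smallmatrix}\right)$.

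For the positive direction, once $E_\alpha$ is in this block form, orthostochasticity is immediate: every $2\times2$ bistochastic matrix is orthostochastic (take the rotation matrix with $\cos^{2}\theta=1-\alpha$), the $1\times1$ blocks are trivially orthostochastic, and a block-diagonal direct sum of orthostochastic matrices is orthostochastic. Undoing the conjugation by the permutation at the start then gives orthostochasticity of the original edge $PQ$. This is essentially the same block-of-size-two decomposition highlighted in the introduction as the engine behind Haagerup's analysis, so I expect the write-up to be short.

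For the negative direction I would argue by contradiction using the chain (polygon) inequalities~(\ref{chain_general1}) and~(\ref{chain_general2}). If $P$ and $Q$ are not complementary, then after normalization $\sigma$ is not a fixed-point-free involution, so the cycle structure of $\sigma$ contains either a fixed point together with a nontrivial displacement elsewhere, or a cycle of length $\ge 3$. In either case one can locate a pair of columns $k<l$ of $E_\alpha$ (for $0<\alpha<1$) in which exactly one row $m$ has both entries nonzero while all other products $\sqrt{B_{jk}B_{jl}}$ vanish, so the right-hand side of~(\ref{chain_general1}) equals $\tfrac12\sqrt{B_{mk}B_{ml}}$, which is strictly smaller than the left-hand side $\sqrt{B_{mk}B_{ml}}$; hence the necessary condition for unistochasticity fails. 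The endpoints $\alpha=0,1$ are permutation matrices, which are trivially orthostochastic, so the edge is not unistochastic except at those two corners. The main obstacle, and the place I would spend the most care, is the combinatorial bookkeeping that turns "$\sigma$ is not a fixed-point-free involution" into the existence of such a column pair — one has to check the two structural cases (a cycle of length $\ge3$; a transposition or longer cycle coexisting with a fixed point) and verify that in each the overlap pattern between two suitably chosen columns is exactly a single shared nonzero row. Everything else is routine.
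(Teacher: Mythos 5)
The paper does not actually prove this proposition; it attributes it to Au-Yeung and Cheng \cite{convex}, so there is no in-paper argument to compare against. Your overall architecture (normalize to $Q=\mathbb{I}$, decompose into $2\times 2$ blocks for the positive direction, violate a chain condition for the negative direction) is the right one and is consistent with the block-of-size-two philosophy used elsewhere in the paper, e.g.\ in the triangle lemma of Section~4.

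There is, however, a concrete error in your unpacking of Definition~\ref{def_comp} that damages the negative direction. Setting $Q=\mathbb{I}$, the condition $P_{ij}=P_{hk}=Q_{ik}=1\Rightarrow Q_{hj}=1$ reads: $i=k$, so $P_{ij}=P_{hi}=1$ must force $h=j$, i.e.\ $\sigma^{-1}(i)=\sigma(i)$ for all $i$. Thus $P$ is complementary to $\mathbb{I}$ if and only if $\sigma^{2}=\mathbb{I}$, \emph{with fixed points allowed}; your computation went wrong by writing $h=\sigma^{-1}(k)$ where the row/column roles give $h=\sigma(k)$, which is why the condition looked vacuous to you, and you then overcorrected to ``involution without fixed points.'' (Fixed-point-freeness is precisely the extra requirement of \emph{strong} complementarity, Definition~\ref{def_strong}.) Your positive direction survives because you in fact allow $1\times 1$ blocks at fixed points. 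But in the negative direction you negate the wrong characterization: the case ``a transposition coexisting with a fixed point'' is complementary, the corresponding edge \emph{is} orthostochastic (it is again block-diagonal with $2\times2$ and $1\times1$ blocks), and no column pair with a single shared nonzero row exists there, so the violation you claim cannot be produced. The correct and only case is that $\sigma$ contains a cycle $(a_1 a_2\cdots a_m)$ with $m\ge 3$; then columns $a_1$ and $a_2$ of $\alpha P+(1-\alpha)\mathbb{I}$ have supports $\{a_1,a_m\}$ and $\{a_1,a_2\}$ respectively, which overlap only in row $a_1$, so inequality (\ref{chain_general1}) reduces to $\sqrt{\alpha(1-\alpha)}\le\frac{1}{2}\sqrt{\alpha(1-\alpha)}$ and fails for all $0<\alpha<1$. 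With that repair the argument is complete and correct.
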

Below we generalize above proposition, proved by Au-Yeng and Cheng \cite{convex} to establish 
unistochasticity of some sets of larger dimension.
Let us first distinguish the following notion:

\begin{definition}
Two permutation matrices  $P$ and $Q$ will be called {\sl strongly complementary}
 if they are complementary and if the condition $P_{ij} = 1$ implies $Q_{ij} = 0$.
\label{def_strong}
	\end{definition}
In other words their non-zero elements are put in different places, 
so their Hadamard product vanishes, $P\circ Q=0$.
Due to symmetry of $\mathcal{B}_{n}$ 
one can take the identity matrix for the permutation matrix $P$ 
 without loosing generality.
A matrix  $Q$ is strongly complementary to $P=\mathbb{I}$ if and only if 
$Q$ is an involution, $Q^2=\mathbb{I}$,   
and every diagonal element of $Q$ is $0$. 
Due to complementarity, the dimension of $Q$ is even.
Making use of the flat bistochastic matrix $W_n$ we can now formulate statements
concerning triangles $\Delta(A,B,C)$ belonging to $\mathcal{B}_{n}$ 
and spanned by vertices $A$, $B$ and $C$. 
	
	\begin{lemma}
Consider dimension $n$ for which a robust Hadamard matrix $H^R$ exists. 
   If  $P$ and $Q$ are strongly complementary permutation matrices,
   then the triangle  $\Delta(P, Q, W_n)$ is unistochastic.
If  $H^R$ is real then the triangle contains orthostochastic matrices.
	\end{lemma}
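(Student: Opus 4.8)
The plan is to mimic the ray construction from the previous lemma (the one producing $U=\sqrt{a}D+\sqrt{b}(H^R-D)$), but now with a two-parameter family of coefficients adapted to the fact that $Q$ is strongly complementary to $P=\mathbb{I}$. Without loss of generality I take $P=\mathbb{I}$, so $Q$ is an involution with zero diagonal, hence $Q$ is a direct sum of $2\times 2$ transposition blocks after a simultaneous row-column permutation; in particular $Q$ is symmetric and $Q\circ\mathbb{I}=0$. A generic point of the triangle $\Delta(\mathbb{I},Q,W_n)$ is $B=\beta_1\mathbb{I}+\beta_2 Q+\beta_3 W_n$ with $\beta_i\ge 0$, $\sum\beta_i=1$. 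Because $\mathbb{I}$, $Q$ and $W_n$ have disjoint supports except that $W_n$ overlaps both, the entries of $B$ take only three values: $B_{ii}=\beta_1+\beta_3/n$ on the diagonal, $B_{ij}=\beta_2+\beta_3/n$ on the $Q$-support, and $B_{ij}=\beta_3/n$ elsewhere. So I need a unitary $U$ whose squared moduli hit exactly these three values in exactly these places.

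The natural ansatz is $U=\sqrt{p}\,D+\sqrt{q}\,(QD')+\sqrt{r}\,(H^R-D)$ for suitable nonnegative $p,q,r$ and suitable diagonal unitary corrections $D,D'$ coming from the diagonal of $H^R$, with the three coefficients chosen so that $p=B_{ii}$, $q=B_{ij}|_{Q}$, $r=B_{ij}|_{\text{rest}}$; concretely $p=\beta_1+\beta_3/n$, $q=\beta_2+\beta_3/n$, $r=\beta_3/n$. To check unitarity I expand $UU^*$ and collect terms. The diagonal of $UU^*$ will be $p+q+(n-2)r$ per row — here I use that each row of $H^R-D$ has $n-1$ off-diagonal entries, but the $Q$-position among them is ``relocated'' to the $\sqrt{q}$ term, leaving $n-2$ entries with coefficient $\sqrt{r}$; wait, more carefully the bookkeeping must be: $U$ should have one entry of modulus $\sqrt p$ (the diagonal), one of modulus $\sqrt q$ (the $Q$-position), and $n-2$ of modulus $\sqrt r$, so the diagonal of $UU^*$ is $p+q+(n-2)r$, and the sum-condition on $B$ forces this to equal $1$. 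The off-diagonal entries of $UU^*$ are where robustness enters: cross terms of the form $H^R_{ik}\overline{D'_{kk}}\,\overline{H^R_{jk}}$ etc. must cancel, and the key identity $H^R_{ij}\overline{H^R_{jj}}+\overline{H^R_{ji}}H^R_{ii}=0$ (equivalently eq.~(\ref{eq:sskew}): $H^R D^*+D (H^R)^*=2\mathbb{I}$), together with $H^R (H^R)^*=n\mathbb{I}$, should make everything collapse. The subtle point is that the $\sqrt{q}\,QD'$ term interacts with the $\sqrt{r}(H^R-D)$ term; since $Q$ is an involution swapping index $i$ with some $\sigma(i)$, the relevant cross term pairs column $k$ contributions at rows $i$ and $j$, and I expect it to cancel against the corresponding piece of $bH^R(H^R)^*$ exactly because at the swapped positions the $H^R$ entries again satisfy the skew relation. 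I would verify this on the block-diagonal normal form of $Q$, where the computation reduces to $2\times 2$ and $3\times 3$ blocks and is transparent.

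The main obstacle is precisely getting the cross-term cancellation between the $Q$-term and the $H^R$-term to work for an arbitrary robust $H^R$, not just a skew or conference one; the clean cancellation in the ray case used only that $H^RD^*$ is robust, and here I must check that ``inserting $Q$'' does not spoil it. I anticipate that the right reformulation is: choose the diagonal corrections so that $H^RD^*$ is skew (by Lemma~\ref{lemma:equiv} it is sign-equivalent to a skew matrix $H^S$), replace $H^R$ by $H^S$ throughout, and then use that $Q$ commutes nicely with the symmetric/antisymmetric decomposition $H^S=\mathbb{I}+K$ with $K^T=-K$, $KK^T=(n-1)\mathbb{I}$. Writing $U=\sqrt{p}\,\mathbb{I}+\sqrt{q}\,Q+\sqrt{r}\,K$ (possibly with a relative phase on the $Q$-term), the product $UU^*$ has diagonal $p+q+(n-1)r$ and the off-diagonal terms split into a $\mathbb{I}$–$K$ part (vanishing by skewness, as in the ray proof), a $Q$–$K$ part (of the form $\sqrt{qr}(QK^T+KQ^T)=\sqrt{qr}(QK^T-KQ)$), and a $\mathbb{I}$–$Q$ part ($\sqrt{pq}(Q+Q^T)=2\sqrt{pq}Q$, which is off-support of the identity but \emph{on} the $Q$-support, so it must be cancelled). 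This shows a bare sum does not work and one genuinely needs the extra diagonal freedom or a phase; resolving this — finding the correct phases/diagonal twist that kills the $Q$–$K$ and $\mathbb{I}$–$Q$ cross terms while leaving the modulus pattern intact — is the crux, and I expect it to go through by choosing the $Q$-block of $U$ to be a $2\times2$ Hadamard-like block $\left(\begin{smallmatrix}\sqrt p & \sqrt q\\ \sqrt q & -\sqrt p\end{smallmatrix}\right)$-type rotation within each transposition block of $Q$, which is exactly the ``decomposition into $2\times2$ blocks'' idea attributed to Haagerup in the introduction. Finally, if $H^R$ (equivalently $K$) is real then $U$ is real orthogonal, giving orthostochasticity of the triangle.
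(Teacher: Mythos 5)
Your reduction to $P=\mathbb{I}$ and your description of the three\-/valued entry pattern of a point of the triangle (value $p$ on the diagonal, $q$ on the support of $Q$, $r$ elsewhere, with $p+q+(n-2)r=1$) match the paper exactly, but the proposal stops short of the decisive step: no unitary matrix is actually produced. Your additive ansatz $U=\sqrt{p}\,\mathbb{I}+\sqrt{q}\,Q+\sqrt{r}\,K$ creates the cross term $2\sqrt{pq}\,Q$ which, as you yourself observe, does not cancel, and you end by saying you ``expect'' a suitable phase twist to exist. The paper's construction is the multiplicative one you are groping for: take $U=\sqrt{B}\circ H^{R}$ entrywise, i.e.\ $U_{ij}=\sqrt{B_{ij}}\,H^{R}_{ij}$. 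Every entry then automatically carries the phase of the corresponding entry of $H^{R}$ --- in particular the entries on the support of $Q$ do, which is precisely the twist your bare $\sqrt{q}\,Q$ term lacks --- and the moduli are right by construction. Unitarity is checked by noting that in $(UU^{*})_{ik}=\sum_{j}\sqrt{B_{ij}B_{kj}}\,H_{ij}\bar{H}_{kj}$ the columns split into groups on which the weight $\sqrt{B_{ij}B_{kj}}$ is constant: for $k=\sigma(i)$ (same $Q$-block, $\sigma$ the involution of $Q$) the group $\{i,k\}$ vanishes because the principal $2\times2$ submatrix of $H^{R}$ is Hadamard (robustness), and the remaining group vanishes by subtracting this from the full row orthogonality of $H^{R}$. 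So the missing idea is a one-line formula, not a delicate phase hunt; as written, your argument does not prove the lemma.

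That said, your instinct that the cross-block interaction is the delicate point is well founded. For rows $i,k$ in \emph{different} $Q$-blocks the columns split into three groups, with weights $\sqrt{pr}$ (columns $\{i,k\}$), $\sqrt{qr}$ (columns $\{\sigma(i),\sigma(k)\}$) and $r$ (the rest); the first is handled by robustness, but the second requires the \emph{non-principal} $2\times2$ submatrix of $H^{R}$ with rows $\{i,k\}$ and columns $\{\sigma(i),\sigma(k)\}$ to have orthogonal rows, which is not part of the definition of robustness and can fail for some robust $H^{R}$ and some involutions $Q$ (e.g.\ for the Paley skew Hadamard matrix of order $8$ one can find such a singular cross-block submatrix). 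The paper's one-line proof glosses over this and implicitly assumes $H^{R}$ is chosen compatibly with $Q$; your plan at least names the obstruction but resolves it for neither ansatz. In any event, a complete proof must exhibit the unitary explicitly and verify all three orthogonality groups, which your write-up currently does not do.
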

	\begin{proof}
		As before, we can restrict our attention to the case $P = \mathbb{I}$. 
       Every matrix on the line connecting $\mathbb{I}$ and $Q$ can be written as:
	\[
		\begin{pmatrix}
			b & a & 0 & 0 & \dots\\
			a & b & 0 & 0 & \dots \\
			0 & 0 & b	& a & \dots\\
			0 & 0 & a & b & \dots\\
			\vdots & \vdots & \vdots & \vdots & \ddots & \\
		\end{pmatrix},
	\]
	where $a+b=1$. Thus, any bistochastic
matrix belonging to the triangle  $\Delta(P, Q, W_n)$ reads
	\[
		\begin{pmatrix}
		b & a & c & c & \dots\\
		a & b & c & c & \dots \\
		c & c & b & a & \dots\\
		c & c & a & b & \dots\\
		\vdots & \vdots & \vdots & \vdots & \ddots & \\
		\end{pmatrix},
	\]
		where $a+b+(n-2)c=1$. 
Taking the entry-wise square root of this matrix and multiplying it element-wise by a robust Hadamard matrix 
we obtain the corresponding unitary matrix, sufficient to show the desired property.
	\end{proof}

This statement is visualized by a gray triangle 
       of unistochastic matrices shown in  Fig. \ref{Fig1}.
The above result can be generalized for a larger set of permutation  
matrices $\{ P_1, P_2, ..., P_k \}$ of dimension $n$,  
in which each pair of matrices is strongly complementary.
Then an analogous reasoning shows that the bistochastic matrices belonging to 
$2$-faces of the polytope 
defined by the convex hull of $k$ these permutation matrices and the flat matrix $W_n$ are unistochastic.

Due to the definition of strong complementarity each pair of matrices has non-zero entries in different places,
so that  $k \leq n$. 
We are not able to determine how the maximal number $k$ of such matrices 
depends on the dimension $n$, nor whether the bistochastic matrices  
belonging to the interior of this polytope are unistochastic.
			
\section{Equi-entangled bases}\label{section:equi}
Any quantum system composed from two subsystems, with $n$ levels each,
can be described in a Hilbert space with a tensor product structure.
It is often natural to use the standard, product basis, usually denoted
by $|k,m\rangle=|k\rangle \otimes |m\rangle$ with $k,m=1,\dots, n$.
However, for certain problems it is advantageous to use bases consisting of 
maximally entangled states. In the simplest case of $n=2$ one uses 
the Bell basis consisting of four orthogonal states of size four,
$|\Psi^\pm\rangle =(|0,0\rangle \pm |1,1\rangle)/\sqrt{2}$ and
$|\Phi^\pm\rangle =(|0,1\rangle \pm |1,0\rangle)/\sqrt{2}$.
These {\sl Bell states} are also called maximally entangled, 
as their partial traces are maximally mixed. In this Section we follow the notation of \cite{We01}, often used in quantum theory.

For any higher dimension $n$ such entangled bases 
were constructed by Werner \cite{We01}.
A slightly more general variant of this construction discussed in {\cite{notes}}
allows one to write a set of $n^2$ orthogonal states related 
to a given unitary matrix $U$ of order $n$.
Making use of unistochastic matrices belonging at a ray
of the Birkhoff polytope and  determined by robust Hadamard matrices,
we shall construct a family of bases interpolating 
between separable and maximally entangled basis.

Consider a unitary matrix $U$ of order $n$, 
associated with the unistochastic matrix $B=U \circ {\bar U}$,
which belongs to the ray $R$ of the Birkhoff polytope ${\mathcal B}_n$
-- see Fig. \ref{Fig1}. 
It exists for all dimensions, for which a robust Hadamard matrix
exists (i.e. for all even dimensions $n \le 20$)
and allows us to construct an equi-entangled orthogonal basis in the Hilbert space
describing a composed system of size $n \times n$.

\begin{proposition}
Let  $|m,m'\rangle$ with $m,m'=1,\dots n$ form the standard
computation basis in the bipartite Hilbert space of size $n \times n$.
Let $U$ be a unitary matrix of order $n$, 
associated with the unistochastic matrix $B=U \circ {\bar U}$, which belongs to
a ray $R$ of the Birkhoff polytope ${\mathcal B}_n$.
Then the set of $n^2$ vectors in this space defined by 
\begin{equation}
|\psi_{m,k}\rangle  = \sum_{j=1}^n U_{mj}\; |j \rangle \otimes |j+k_{| {\rm mod}\ n} \rangle , \ 
\ k,m=1,\dots, n
\label{vectors}
\end{equation}

a) forms an orthonormal basis;

b) the basis is equi-entangled, as all its elements have the same degree of entanglement.
		\end{proposition}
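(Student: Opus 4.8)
The plan is to verify the two claims by direct computation. For part (a), I would first check normalization: since the vectors $|j\rangle\otimes|j+k\rangle$ for fixed $k$ and varying $j$ are orthonormal, $\langle\psi_{m,k}|\psi_{m,k}\rangle=\sum_j|U_{mj}|^2=1$ because $U$ is unitary. Next, orthogonality splits into two cases. If $k\neq k'$, the second tensor factors $|j+k\rangle$ and $|j'+k'\rangle$ never coincide for the same value of the first factor index, so $\langle\psi_{m,k}|\psi_{m',k'}\rangle=0$ automatically, regardless of $m,m'$. If $k=k'$ but $m\neq m'$, then $\langle\psi_{m,k}|\psi_{m',k}\rangle=\sum_j \overline{U_{mj}}\,U_{m'j}=(UU^*)_{m'm}=\delta_{mm'}=0$, again by unitarity. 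Since we have $n^2$ vectors that are pairwise orthonormal in an $n^2$-dimensional space, they form an orthonormal basis.

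For part (b), I would compute the reduced density matrix of $|\psi_{m,k}\rangle$ on the first subsystem (or the second). Writing $\rho_1=\operatorname{Tr}_2|\psi_{m,k}\rangle\langle\psi_{m,k}|$, the partial trace over the second factor picks out $\langle j+k|j'+k\rangle=\delta_{jj'}$, so $\rho_1=\sum_j|U_{mj}|^2\,|j\rangle\langle j|$ is diagonal with entries $B_{mj}=|U_{mj}|^2$. Thus the entanglement spectrum of $|\psi_{m,k}\rangle$ is exactly the $m$-th row of the bistochastic matrix $B$, and in particular it does not depend on $k$ at all. Hence for each fixed $m$ the $n$ vectors $|\psi_{m,1}\rangle,\dots,|\psi_{m,n}\rangle$ all have the same Schmidt coefficients.

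The remaining point — and the only place where the ray hypothesis is actually used — is that the entanglement degree is the same across different values of $m$ as well. Here I would invoke the structure of $U$ coming from a robust Hadamard matrix as in the lemma: $U=\sqrt{a}\,D+\sqrt{b}\,(H^R-D)$, so $|U_{mj}|^2=a$ when $j=m$ and $|U_{mj}|^2=b$ otherwise. Therefore every row of $B$ is a permutation of $(a,b,\dots,b)$, the reduced state $\rho_1$ has the same spectrum $\{a,b,\dots,b\}$ for all $m$ and all $k$, and so any entanglement monotone (e.g.\ the von Neumann entropy $-a\log a-(n-1)b\log b$, or any Rényi/Schmidt-rank based measure) takes a single common value on the whole basis. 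I would close by noting that as $\alpha$ runs over $[0,1]$ this common value interpolates between that of the product basis ($\alpha=1$, $a=1$, $b=0$) and that of the maximally entangled basis ($\alpha=0$, $a=b=1/n$). The main obstacle, such as it is, is merely bookkeeping the mod-$n$ index shift carefully so that the orthogonality argument for $k\neq k'$ is airtight; no genuine difficulty arises.
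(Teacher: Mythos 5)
Your proposal is correct and follows essentially the same route as the paper: part (a) by the same direct orthogonality computation using unitarity of $U$, and part (b) by identifying the entanglement spectrum of $|\psi_{m,k}\rangle$ with the $m$-th row of $B$, which on a ray is a permutation of $(a,b,\dots,b)$. Your passage through the reduced density matrix rather than reading off the Schmidt decomposition is only a cosmetic difference (and in fact handles the phases slightly more cleanly than the paper's remark about replacing $U_{mj}$ by its modulus).
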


		\begin{proof}
a) To show this property it is sufficient to check the scalar product
\begin{multline}
\langle \psi_{m,k}|\psi_{m',k'}\rangle 
=\sum_{j,j'} U^{*}_{mj}  \langle j,j+k_{| {\rm mod}\ n}|j',j'+k'_{| {\rm mod}\ n}\rangle U_{m'j'} = \\
    = \sum_{j,j'}U^{*}_{jm}U_{m'j'}\delta_{k,k'}\delta_{j,j'}
    = \delta_{k,k'}\sum_{j} U_{m'j}U^{*}_{jm} = \delta_{k,k'}\delta_{m,m'}.
\end{multline}
Due to unitarity of $U$ 
the orthogonality  holds for any $k,k',m,m'=1,\dots,n$,
 which implies that the vectors  (\ref{vectors}) form an orthonormal basis.

b) To analyze the degree of entanglement we specify our considerations
to a family $U(a)$ of unitary matrices
 given in ~(\ref{eq:bista})   
  and  parameterized by a single parameter $a$. 
A state $|\psi_{m,k}\rangle$ defined in eq.~(\ref{vectors}) 
can be rewritten in a slightly different way,
\begin{equation}\label{eq:basis}
	|\psi_{m,k}\rangle  = \sum_j |U_{mj}|\; |j\rangle_1 \otimes |j\rangle_2,
\end{equation}
where $|j\rangle_1$ is an element of the computational basis in the first system,
while $|j\rangle_2 =  |j+k_{| {\rm mod}\ n}\rangle$ 
  is obtained by relabeling the second basis.
As any quantum state is defined up to a complex phase we are allowed to replace the complex prefactor $U_{mj}$ by its modulus $|U_{mj}|$.

Note that the above form can be interpreted as the Schmidt decomposition of the bipartite state,
$|\psi_{AB}\rangle =\sum_{j=1}^n \sqrt{\lambda_j} |j\rangle_A \otimes |j\rangle_B$.
Observe that the components of the Schmidt vector, $\lambda_j=|U_{mj}|^2$,
form a row of the bistochastic matrix $B_a$ belonging to a 
ray of the Birkhoff polytope ${\mathcal B}_n$.
For each basis state $|\psi_{m,k}\rangle$ its ordered Schmidt vector is the same,
$\lambda=(a,b,b,b,\dots,b)$ with $b=(1-a)/(n-1)$, so that
  all measures of entanglement of the states $|\psi_{m,k}\rangle$ are equal.
Therefore the orthonormal basis parameterized by a number $a\in [1/n,1]$ 
forms a family of equi-entangled bases, which interpolate
between a maximally entangled basis ($a=1/n$) and a separable basis ($a=1$).
\end{proof}

To characterize the degree of entanglement one can apply
the entanglement entropy of a basis state,
equal to 
the Shannon entropy of the corresponding Schmidt vector
and to  the von Neumann entropy of the reduced density matrix.
As the Schmidt vector of each state has the structure
$\lambda=(a,b,b,b,\dots,b)$ its entropy reads
\begin{equation}\label{eq:entropy}
S(\lambda) = - a \log a 
             -(1-a) \log \frac{1-a}{n-1} ,
\end{equation}
and yields the entanglement entropy of the basis states (\ref{eq:basis}).
It is equal to zero for $a=1$ and a separable basis
and it achieves the maximum equal to $\log n$ for $a=1/n$, 
which corresponds to the maximally entangled basis, analyzed in \cite{We01}.

\section{Set ${\mathcal U}_4$ of unistochastic matrices of order $n=4$}\label{section:set_dimension4}
For even dimensions $n\le 20$ we have shown that
all rays of the Birkhoff polytope are unistochastic. This statement
holds also in the case $n=4$ -- the smallest dimension for which the
unistochasticity problem is still open \cite{AMM91}.
In this case the chain conditions (\ref{chain_general1}) and (\ref{chain_general2}) are necessary, 
but in contrast to the case $n=3$ 
they are not sufficient to assure unistochasticity \cite{BEKTZ05}.

Although analytic form of such conditions remains still not known,
we shall apply a numerical procedure proposed by Haagerup -- see Appendix A --
to study the properties of the set 
${\mathcal U}_4$ of unistochastic matrices of order $n=4$.
Generating random bistochastic matrices according to the flat measure in 
the Birkhoff polytope ${\mathcal B}_4$ according to the algorithm 
described in \cite{CSBZ09}
we found that the
relative volume of the set ${\mathcal C}_4$ of bistochastic matrices 
satisfying all chain conditions (\ref{chain_general1}) and (\ref{chain_general2}) 
is $V_C\approx 0.71$,
 while the volume of its subset ${\mathcal U}_4$ containing unistochastic matrices is 
 $V_U\approx 0.61$ in comparison to the total volume of ${\mathcal B}_4$. 

However, since not much is known about geometric properties of the subsets ${\mathcal U}_4 \subset \mathcal{C}_4 \subset \mathcal{B}_4$ of the Birkhoff polytope, we shall study various cross-sections which include the flat matrix $W_4$ located at its center.

Any such cross-section is determined by three matrices which do not belong to a single line.
Since one of these matrices is selected to be $W_4$, we have to specify only two other matrices.
Cross-sections shown in Fig.  \ref{FigMerged} are specified by two permutation matrices.
Due to the symmetry of the polytope ${\mathcal B}_4$  we may take the first
one as identity, $\mathbb{I}_4$, without loosing the generality.

There exist different kinds of edges in ${\mathcal B}_4$ labeled by their length in sense of the Hilbert-Schmidt distance, $D(X,Y) = \sqrt{{\rm Tr}[(X-Y)^2]}$.
One distinguishes \cite{BEKTZ05} following four classes of edges:
\begin{enumerate}[a)]
			\item unistochastic short edges,  $\mathbb{I}_4 P_A$ of length $2$, where  ${\rm Tr}P_A=2$,
			\item not unistochastic middle edges,  $\mathbb{I}_4 P_B$ of length $\sqrt{6}$, where  ${\rm Tr}P_B=1$,
            \item not unistochastic long edges,  $\mathbb{I}_4 P_C$ 
                           of length $\sqrt{8}$,  where  ${\rm Tr}P_C=0$ and $P_C^2 \ne \mathbb{I}_4$,
            \item unistochastic long edges,  $\mathbb{I}_4 P_D$ 
                           of length $\sqrt{8}$,  where  ${\rm Tr}P_D=0$ and $P_D^2 = \mathbb{I}_4$.

\end{enumerate}

Cross-sections determined by unistochastic edges of length $\sqrt{8}$ are simple as all bistochastic matrices are unistochastic, which follows from  Section \ref{section:triangles}. Below we present cross-sections determined by edges from remaining three classes, created by the following permutation matrices (the numbers in the subscripts denote the cycles):

\[
		P_A = P_{12} = \begin{pmatrix}
			0 & 1 & 0 & 0 \\
			1 & 0 & 0 & 0  \\
			0 & 0 & 1	& 0 \\
			0 & 0 & 0 & 1 \\
		\end{pmatrix},\text{ }
        P_B = P_{123}= \begin{pmatrix}
			0 & 0 & 1 & 0 \\
			1 & 0 & 0 & 0 \\
			0 & 1 & 0	& 0 \\
			0 & 0 & 0 & 1 \\
		\end{pmatrix}, \text{ }
        P_C = P_{1234} = \begin{pmatrix}
			0 & 0 & 0 & 1 \\
			1 & 0 & 0 & 0  \\
			0 & 1 & 0 & 0 \\
			0 & 0 & 1 & 0  \\
		\end{pmatrix}.
	\]

 \begin{figure}[H]
				\centering
	\includegraphics[scale=0.14]{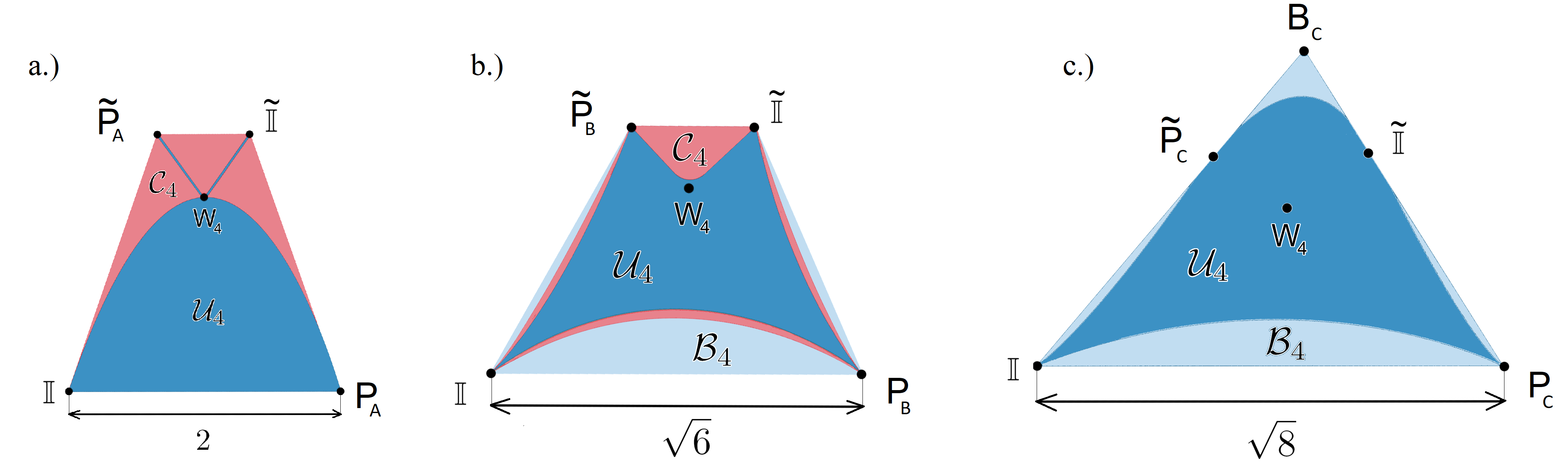}
\captionsetup{singlelinecheck=off}
	\caption[]{Cross-sections of the Birkhoff polytope $\mathcal{B}_{4}$: 
the flat matrix $W_4$ in the center, determined by selected edges:
     \begin{enumerate}[a)]
			\item unistochastic  short edge $\mathbb{I} \leftrightarrow P_A$ of length $2$
			\item not unistochastic middle edge $\mathbb{I} \leftrightarrow P_B$ of length $\sqrt{6}$
            \item not unistochastic long edge $\mathbb{I} \leftrightarrow P_C$ of length $\sqrt{8}$
\end{enumerate}
 Darkest gray (in color: dark blue) represents unistochastic set $\mathcal{U}_4$, medium gray (red) represents the set $\mathcal{C}_4$ of matrices that satisfy the chain conditions, and the lightest gray (light blue) denotes bistochastic matrices, that do not satisfy chain conditions. At the section showed in 
 panel c) the  sets $\mathcal{U}_4$ and $\mathcal{C}_4$ do coincide, 
 while the bistochastic matrix at the edge reads
 $B_c= 2 W_4 - \frac{1}{2} P_c-\frac{1}{2}\mathbb{I}_4$.
Note the dark sets  $\mathcal{U}_4$ in all panels containing both counter-rays 
are not convex but are star-shaped.}
\label{FigMerged}
		\end{figure}

To produce each plot we generated a lattice of around $10^4$ bistochastic
matrices belonging to a given 2D cross--section and verified
if conditions (\ref{chain_general1}) are satisfied and
whether numerical procedure described in Appendix A returns 
the corresponding unitary matrix.
        The Birkhoff polytope $\mathcal{B}_{4}$ possesses an interesting property -- in every neighborhood of the flat unistochastic matrix $W_4$ localized at its center there are non-unistochastic matrices. Hence there is a direction, in which deviation by an arbitrary small $\epsilon$ leads to a matrix which is not unistochastic \cite{BEKTZ05}.
        Selecting one of those directions according to \cite{BEKTZ05} we find a cross-section shown in Fig. 3, such that non-unistochastic matrices are located arbitrary close to $W_4$:
        
        \begin{figure}[H]
				\centering
	\includegraphics[scale=0.2]{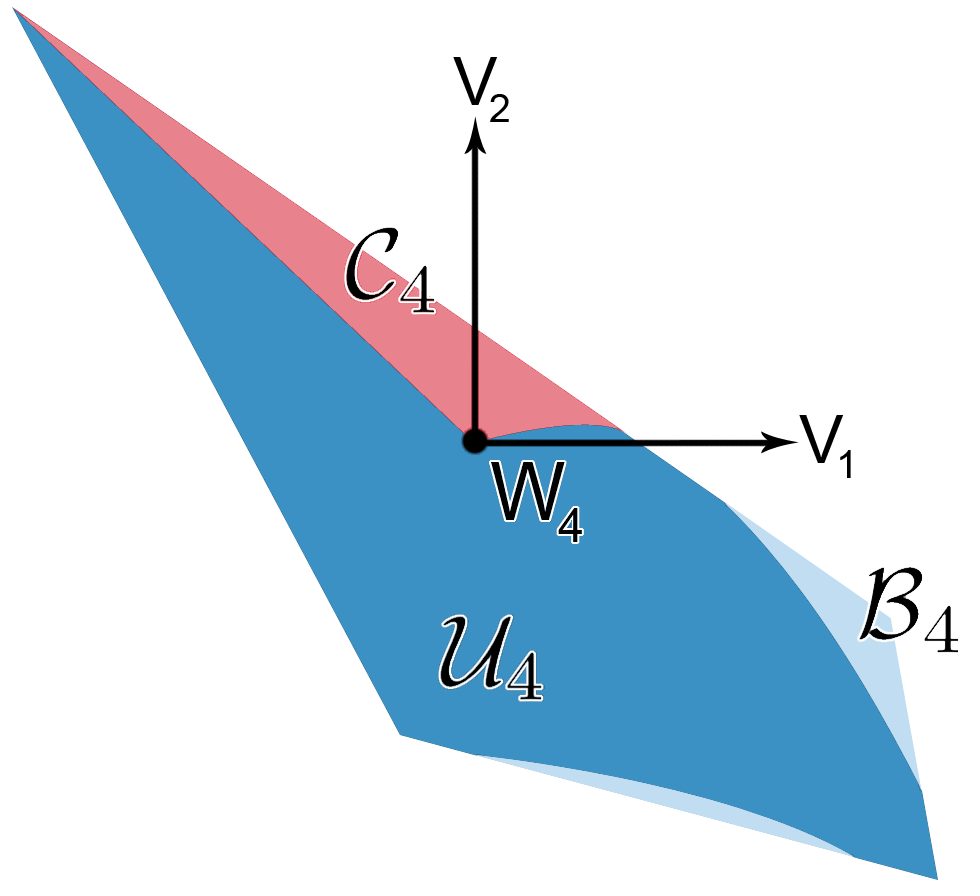}
\caption{Cross-section of the Birkhoff polytope $\mathcal{B}_{4}$, determined by the  directions $V_1$ and $V_2$ described in \cite{BEKTZ05}. The flat matrix $W_4$ in the center of the $\mathcal{B}_4$ is also localized at the boundary of the set $\mathcal{U}_4$.
Colors have the same meaning as in Fig. 2.}
\label{Fig5}
		\end{figure}

Our findings do not contradict the conjecture, that the set ${\mathcal U}_4$ of unistochastic matrices is star-shaped with respect to $W_4$.
There exist 2D cross-sections, in which the sets ${\mathcal U}_4$ and ${\mathcal C}_4$ coincide, but it is easy to find a cross-section which reveals that the inclusion relation $\mathcal{U}_4 \subset \mathcal{C}_4$ is proper. Compare the discussion of relative volumes of these sets presented in Appendix B.

\section{Concluding remarks}

A notion of Hadamard matrices robust with respect to a projection onto a subspace formed by any two of the basis vectors is introduced.
In the case of a double even dimension, $n=4k$, existence of such matrices follows from 
existence of skew real Hadamard matrices. Furthermore, 
existence of symmetric conference matrices of dimension $n=4k+2$ with $k\ge 1$
yields a construction of a robust complex Hadamard matrix of this size.
This implies that robust Hadamard matrices exist for all even dimensions $n \le 20$.

Existence of robust Hadamard matrices of order $n$ allows us to show that all the rays
of the Birkhoff polytope ${\mathcal B}_n$ of bistochastic matrices
are unistochastic. Hence for any point $B_a$ at the line joining
an edge (a permutation matrix) with the flat matrix $W_n$ at the 
center of the polytope, 
there exists a corresponding unitary matrix $U(a)$ of size $n$
such that $B_a=U(a) \circ {\bar U(a)}$. Furthermore, if two permutation 
matrices $P$ and $Q$ are strongly complementary \cite{convex} -- 
see  Definition \ref{def_strong}  --
the entire triangle $\Delta(P,Q,W_n)$ is unistochastic.

The family of unitary matrices $U(a)$ of order $n$ obtained with help of 
robust Hadamard matrices of size $n$,
allows us to construct a family of equi-entangled bases in the 
composed Hilbert space of $n \times n$ system.
This family interpolates between the separable basis and the maximally entangled basis.
Each interpolating basis consists of $n^2$ normalized vectors with the same
Schmidt vectors, which  determine the entropy of entanglement (\ref{eq:entropy}). 
In contrast to the earlier constructions given in \cite{KM06} and extended in \cite{GL10}
our construction is based on a straight line in the Birkhoff polytope ${\mathcal B}_n$,
so the Schmidt vectors contain only two different entries.

This work contributes to investigations \cite{BEKTZ05}
of the set ${\mathcal U}_4$ of unistochastic matrices of order $n=4$.
Making use of the Haagerup algorithm to verify numerically  
whether a given bistochastic matrix $B \in {\mathcal B}_4$ 
is unistochastic -- see Appendix A --
we provided a first estimation of the relative volume of the set 
${\mathcal U}_4$ of unistochastic matrices and the larger set $\mathcal{C}_4$ of
these bistochastic matrices, which satisfy all chain conditions (3.1) and (3.2).
Furthermore, we studied the geometry of cross-sections of the set ${\mathcal U}_4$ 
along planes determined by selected permutation matrices --
corners of the Birkhoff polytope ${\mathcal B}_4$.

\medskip

Let us conclude the paper with a short list of open questions.

\begin{enumerate}
\item Given a Hadamard matrix $H$ of order $n$ is it possible to find
    an equivalent robust matrix, $H^{R}\sim H$?
  Due to Lemma \ref{lemma:equiv} this question is 
analogous to the problem concerning existence of skew Hadamard matrices.

\item What are the properties of the set of robust Hadamard matrices
    in dimension $n$ for which there exists
       several not equivalent Hadamard matrices $(n=16,\,20,\,\dots)$?

\item Is the convex hull of all mutually strong complementary permutation matrices 
of a fixed order $n$ and the flat matrix $W_n$ unistochastic? 

\item Is it possible to obtain necessary and sufficient conditions for unistochasticity
    applicable for any bistochastic matrix of order $n \geq 5$?
    
\item For any large dimension $n$, a generic bistochastic matrix is conjectured to meet all chain conditions -- see Appendix B. Following question remains open: what is the dependence of fraction of unistochastic matrices $f_u$ on dimension $n$ for $n \geq 5$?

\end{enumerate}
\medskip

{\bf Acknowledgements.}
One of the authors (K.{\.Z}.) had a chance to discuss the 
unistochasticity problem with the late Uffe Haagerup
during the conference in B\k{e}{}dlewo in July 2014, where he learned  about the
way to treat the $n=4$ case presented in the Appendix A.
It is a pleasure to thank Ingemar Bengtsson and Irina Dimitru
for numerous discussions on equi-entangled bases and for sharing with us their unpublished notes.
We are thankful to Dardo Goyeneche and Wojciech Tadej for numerous interactions 
and also to Robert Craigen and William Orrick
for fruitful discussions during the workshop in Budapest in July 2017. 
Special thanks are due to  Mate Matolcsi and Ferenc Sz\"{o}ll\H{o}si,
for organizing such a successful conference
which made these interactions possible.
We are deeply obliged to the referee for a long list of constructive comments, 
valuable hints to the literature and for suggesting us results presented in Appendix D.
Financial support by Narodowe Centrum Nauki
under the grant number DEC-2015/18/A/ST2/00274  is gratefully acknowledged.

\bigskip
\noindent
{\bf Appendix A.} 
{\bf Haagerup procedure for studying unistochasticity of $n=4$ matrices.}

\medskip
In this appendix we present a method due to the late Uffe Haagerup of searching for a unitary matrix corresponding to any unistochastic matrix of order $n=4$. 
Given a bistochastic matrix $B \in  \mathcal{B}_{4}$
we wish to find $U$ such that $B_{ij} = |U_{ij}|^2$.

Any matrix of order four can be decomposed into four blocks of order two. 
Let us then represent in this way the unitary matrix $U$ we are looking for, 
\begin{equation} U = \left( \begin{array}{c|c} A & X \\ \hline Y & D \end{array} 
\right) = \left( \begin{array}{llll} \sqrt{B_{11}} & \sqrt{B_{12}} & \sqrt{B_{13}} 
& \sqrt{B_{14}} \\ \sqrt{B_{21}} & \sqrt{B_{22}}e^{i\phi} & \sqrt{B_{23}}e^{i\alpha_1} 
& \sqrt{B_{24}}e^{i\alpha_2} \\ \sqrt{B_{31}} & \sqrt{B_{32}}e^{i\beta_1} & \cdot & 
\cdot \\ \sqrt{B_{41}} & \sqrt{B_{42}}e^{i\beta_2} & \cdot & 
\cdot \end{array} \right) \ ,
\label{Uffe}
 \end{equation}

Note that all the moduli are determined by the bistochastic 
matrix $B$, so only the phases remain unknown.
Unitarity  of $U$ implies that 
\begin{equation} UU^* = \mathbb{I} \hspace{5mm} \Rightarrow \hspace{5mm} 
\left\{ \begin{array}{l} AA^* + XX^* = \mathbb{I} \\ \\ AY^* + XD^* 
= 0 \ . \end{array} \right.
\label{unit}
\end{equation}

Due to existence of the Hadamard matrix $H_4$
we know that the flat matrix $W_4$ is orthostochastic,
so we can assume that $B\ne W_4$.
Permuting rows and columns of $B$ one can 
rearrange the matrix in such a way that the following relation holds
\begin{equation*} 
	B_{11} + B_{12} + B_{21} + B_{22} < 1 \ . 
\end{equation*}
After this is done the norm of  the block $A$ is bounded,
\begin{equation*} 
  ||A||^2_{\rm HS} < 1 \hspace{3mm} \Rightarrow \hspace{3mm} \mbox{eigenvalue}(AA^*) < 1 \hspace{3mm} \Rightarrow \hspace{3mm} 
  \mbox{eigenvalue}(XX^* ) > 0 \ , 
\end{equation*}
so the neighboring block $X$ is  invertible.
Hence we can use the matrix $X^{-1}$ to transform
the second unitarity condition in (\ref{unit})
into an explicit expression for the lower diagonal block $D$,
\begin{equation} 
	D = -YA^* (X^{*})^{-1} \ .
    \label{DDD}
\end{equation}
\noindent The next step is to find the phases in the blocks $X$ and $Y$. 
To take into account orthogonality between 
the first two rows of the matrix $U$,
it is convenient to introduce four auxiliary variables,
\begin{equation*} l_1 = \sqrt{B_{11}B_{21}} \ , \hspace{4mm} 
l_2 = \sqrt{B_{12}B_{22}} \ , \hspace{4mm} l_3 = \sqrt{B_{13}B_{23}} \ , \hspace{4mm} 
l_4 = \sqrt{B_{14}B_{24}} \ . 
\end{equation*}

They allow us to rewrite this orthogonality condition,
\begin{equation}
	l_1 + l_2e^{i\phi} + l_3e^{i\alpha_1} + l_4e^{i\alpha_2} = 0 \,
    \label{chain}
\end{equation}
which can be treated as an equation for the unknown phases.
Orthogonality requires that a chain formed out of four links
of lengths $l_i$ has to be closed,
so the longest link is not longer than the sum of remaining links,

\begin{align} 
\label{links1}
l_i \leq \frac{1}{2} \sum^4_{j=1} l_j \quad \textrm{for} \quad i = 1,\dots,4.
\end{align}

If this condition is not satisfied, 
the matrix $B$ is clearly not unistochastic.
Observe that these conditions can be interpreted as particular cases
of the general conditions (\ref{chain_general1}) and (\ref{chain_general2}).

If  inequalities (\ref{links1}) are fulfilled 
there exist two solutions of eq.~(\ref{chain})
corresponding to a convex and a non-convex polygon,
\begin{equation*} \alpha_1 = \alpha_1(\phi) \ , \hspace{6mm} 
\alpha_2 = \alpha_2(\phi) \ 
 \end{equation*} 
which depend on the phase $\phi$ treated here as a free parameter.

Making use of the orthogonality condition 
between two first columns of $U$ we arrive at an
equation analogous to (\ref{chain}),
which for a given phase $\phi$
can be solved  for unknown phases $\beta_1$ and $\beta_2$. 
This determines the blocks $X$ and $Y$ and allows us to obtain
the remaining block $D$ of $U$.

The catch is that the explicit formula  (\ref{DDD})
produces a matrix $D$, which needs not to be 
compatible with the structure imposed by the initial 
bistochastic matrix $B$. To find the desired unitary matrix
$U$ it is sufficient to check that a single element
of $D$ has the correct norm. Hence we arrive at 
the following  criterion for unistochasticity  for a matrix  $B$ of order four:

\medskip
{\sl
A bistochastic matrix $B \in  \mathcal{B}_{4}$
is unistochastic if there exist a phase $\phi$ entering eq.~(\ref{Uffe}), 
(which determines phases $\alpha_i$ and $\beta_i$ and thus blocks $X$ and $Y$), 
such that the block $D$ obtained by eq.~(\ref{DDD})
satisfies the constraint $|D_{11}|^2=B_{33}$.}

\medskip
If this is the case the unitary matrix $U$ given by the form (\ref{Uffe})
satisfies the unistochasticity condition,
conveniently written by the Hadamard product, $B=U\circ {\bar U}$.
Note that the above procedure can be easily implemented numerically
for any given bistochastic matrix $B\ne W_4$ of order four.

\bigskip
\noindent
{\bf Appendix B.} 
{\bf Unistochasticity and chain link conditions in small dimensions -- numerical results}

\textbf{Random points in a cube.}
The simplest method to generate a random bistochastic matrix $B$ from the Birkhoff polytope $\mathcal{B}_{n}$ with respect to the uniform measure is to take random numbers, uniformly distributed in $[0,1]$ for each entry of the core -- the principal submatrix of order $n-1$, which determines the matrix $B$.
If the sum of any row or column of the
core is greater than $1$ or the sum of all elements of the core is
smaller than $n-2$, then the generated matrix is not
bistochastic and will not be considered. If all these conditions are met, we generate a bistochastic matrix by filling the missing row and column
with values which will add up all rows and columns to $1$. The entry $B_{nn}$ is equal to the
sum of all elements of the core reduced by $n-1$.

To optimize computational procedure of generating the cores, we use
row discrimination. Drawing the core row-by-row (rows are independent)
allows one to check the sum of the row at every step and stop the procedure if it exceeds unity.
This method occurs to work efficiently for dimensions
$n\leq6$, so for higher dimensions one needs to apply other techniques \cite{CSBZ09,CKKZ17}.

\textbf{Sinkhorn algorithm}
To generate a bistochastic matrix for $n \geq 7$ we also used a method described in \cite{CSBZ09}, analogous to the Sinkhorn algorithm \cite{sinkhorn1964}, which normalizes rows and columns of a given square matrix
in a following sequence:
\begin{enumerate}
\item input a random stochastic matrix of dimension $n$ with positive elements,
\item normalize every row by dividing it by the sum of its elements,
\item normalize every column of the matrix by dividing it by the sum of its elements,
\item go to point 2., unless the matrix is bistochastic up to a certain accuracy
with respect to the chosen norm.
\end{enumerate}
In practice we stopped the procedure if all the sums of rows and vectors are close to unity up to the sixth decimal place $(0.9999999\leq\varSigma rows,\varSigma columns\leq1.0000001)$.
This procedure occurs to be much faster than taking points at random from the core
for dimensions $n>6$.

\textbf{Dirichlet distribution.}
It is natural to study the measure induced in $\mathcal{B}_n$ by the Sinkhorn algorithm applied to random stochastic matrices with entries described by the Dirichlet distribution \cite{Dirichlet}, $P_s(x_1,\dots,x_n) \propto \prod^n_{i=1} x^{1-s}_i$, with the constraint $\sum_i x_i = 1$. For $s = 1$ this distribution coincides with the uniform distribution in the probability simplex. For a certain value of the parameter $s_*  = \frac{1}{2n^2}(n^2-2+\sqrt{n^4-4})$ the measure induced in the set of bistochastic matrices becomes uniform  \cite{CSBZ09} in the limit of large $n$. 

To generate a sequence of $n$ numbers distributed according to Dirichlet
distribution with a chosen parameter $s$ we used the following sequence:
\begin{enumerate}
\item generate $n$ independent random numbers from the gamma distribution of
rate $1$ and shape $s$. Explicitly, every element of $(x_{1},x_{2},\ldots,x_{n})$,
is drawn with probability density $\frac{1}{\Gamma(s)}x^{s-1}e^{-x}$.
\item normalize every element by dividing it by the sum of all elements, 
$x_{i} \to x'_i = \frac{1}{\underset{i}{\sum x_{i}}}x_{i}$.
\end{enumerate}

Applying the above procedure $n$ times we obtain random stochastic matrix of $n$ independent
rows each distributed according to Dirichlet distribution. Making use of the Sinkhorn algorithm we to obtain an ensemble of bistochastic matrices which is uniformly
distributed at the center of the Birkhoff polytope.

Numerical computations show that the first method (generating random points in a cube) is reliable for $n \leq 6$, while for $n > 6$
the Sinkhorn method, used with the Dirichlet parameter $s_*$, becomes more efficient.
A numerical estimation of the relative volume of the set of unistochastic matrices is shown in Fig.~\ref{Fig2}. Note that the relative volume of the set $\mathcal{C}_n$ of matrices satisfying the chain conditions approaches unity.

\begin{figure}[H]
				\centering
	\includegraphics[scale=0.8]{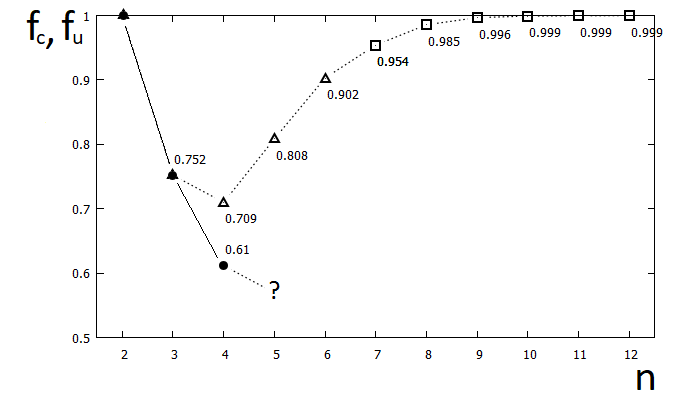}
\caption{Fraction $f_u$ of random bistochastic matrices, generated with respect to the flat measure, satisfying unistochasticity conditions (black circle -- $\bullet$) computed for dimensions $n < 5$.
Empty symbols (white triangle -- ${\bigtriangleup}$ and white square -- $\square$) denote the fraction $f_c$ of bistochastic matrices, which fulfill all chain conditions (3.1-3.2). Dotted line is plotted to guide the eye.}
\label{Fig2}
		\end{figure}
        
\bigskip
\noindent
{\bf Appendix C.} 
{\bf No robust Hadamard matrices of order $3$ and $5$}

To analyze whether a complex Hadamard matrix $H'$ of order $n$
 satisfies the equivalence relation (\ref{eq:equi_relations1}) with respect to a given Hadamard matrix $H$
one can use the set of invariants of Haagerup \cite{Ha96},
\begin{equation} 
\Lambda_{ij,kl}:= H_{ij}  {\bar H_{kj}} H_{kl} {\bar H_{il}},
\label{invH}
\end{equation}
where no summation over repeated indices is assumed
and $i,j,k,l=1,\dots, n$.
The complex numbers $\Lambda_{ij,kl}$, 
depending on the cumulative difference of phases,
are invariant with respect to multiplication of $H$ 
by diagonal unitary matrices.
Even though these $n^4$ complex numbers may be altered by permutations of rows and columns,
the entire set $\Lambda(H)=\{\Lambda_{ij,kl}\}_{i,j,k,l=1}^n$
remains invariant with respect to the equivalence relation (\ref{eq:equi_relations1}).

In the case of the Hadamard matrix $H_2$ the set of invariants
$\Lambda(H_2)$ contains the number $-1$ corresponding to the phase $\pi$.
Hence the set $\Lambda(H^R)$ for a robust Hadamard matrix $H^R$ of order $n$ has to 
contain at least $n(n-1)/2$ these entries. 

Consider now a Fourier matrix $F_n$ with $n>1$ and entries
\begin{equation}
  \mathcal(F_n)_{jk} = {e}^{ i (j-1)(k-1)2 \pi /n},
\end{equation}
with $j,k=1,\dots, n$,
which is a complex Hadamard matrix. 
If the dimension  $n$ is odd the set of Haagerup invariants
$\Lambda(F_n)$ does not include the number $-1$ as adding 
multiples of the basic phase $2 \pi/n$ one cannot obtain $\pi$.
This implies directly that for any odd number $n > 1$
the Fourier matrix $F_n$ and any equivalent complex Hadamard matrix
is not robust.
Since for $n=3$ and $n=5$ all complex Hadamard matrices are equivalent to
the Fourier matrix \cite{Ha96}, we arrive at the desired statement.

\medskip

{\bf Proposition.} 
{\sl There are no robust Hadamard matrices of dimension $n=3$ and $n=5$.}

\medskip

Alternatively, to show that there are no robust Hadamard matrices of order $3$ one can use the result of Cohn \cite{Cohn}, who establishes that "there is no
room for a sub-Hadamard matrix of size $m$ within a Hadamard matrix of size
$n$, where $m > n/2$."

\bigskip
\noindent
{\bf Appendix D.} 
{\bf On existence of robust Hadamard matrices}\\

In this Appendix we provide results concerning existence of robust Hadamard matrices which were suggested by the referee and go beyond the statements formulated in Appendix C. \\

Let $C$ be a complex conference matrix of order $n$, that is $C_{jj}=0$, $|C_{jk}|=1$ and $CC^{*}=(n-1)\mathbb{I}$.

\begin{lemma}
If $R$ is a robust Hadamard matrix of order $n \ge 2$, then $R$ is equivalent in the sense of eq.~(\ref{eq:equi_relations2}) to a matrix $H=C+i\mathbb{I}$, where $C$ is a self-adjoint complex conference matrix.
\label{lemma:robust_self-adjoint}
\end{lemma}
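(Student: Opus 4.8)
The plan is to exhibit the conference matrix $C$ explicitly, by first normalising (``dephasing'') the diagonal of $R$ so that every diagonal entry becomes $i$, and then reading off self-adjointness, unimodularity of the off-diagonal entries, and the orthogonality relation $CC^{*}=(n-1)\mathbb{I}$ from the $2\times2$ Hadamard conditions together with the identity $HH^{*}=n\mathbb{I}$.

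The first step is to observe that robustness survives multiplication of columns by unimodular scalars. Indeed, if $D$ is a unitary diagonal matrix, then for any $i\neq j$ the principal $2\times2$ submatrix of $RD$ on rows and columns $i,j$ equals the corresponding submatrix of $R$ multiplied on the right by $\operatorname{diag}(D_{ii},D_{jj})$, hence is again a $2\times2$ Hadamard matrix; moreover $(RD)(RD)^{*}=RR^{*}=n\mathbb{I}$, so $RD$ is a robust Hadamard matrix. Since each diagonal entry of a robust Hadamard matrix is unimodular, I would set $D:=\operatorname{diag}\bigl(i\overline{R_{11}},\dots,i\overline{R_{nn}}\bigr)$ and $H:=RD$, so that $H$ is a robust complex Hadamard matrix differing from $R$ only by right multiplication by a unitary diagonal matrix (an operation of the type~(\ref{eq:equi_relations2}) in the complex setting), and $H_{jj}=i|R_{jj}|^{2}=i$ for every $j$.

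Next I would apply robustness of $H$ to a pair $i\neq j$: the block $\bigl(\begin{smallmatrix} i & H_{ij}\\ H_{ji} & i\end{smallmatrix}\bigr)$ being a $2\times2$ Hadamard matrix forces its two rows to be orthogonal, $i\,\overline{H_{ji}}-i\,H_{ij}=0$, i.e.\ $H_{ij}=\overline{H_{ji}}$. Combined with $\overline{H_{jj}}=-i=H_{jj}-2i$, this is precisely the statement $H^{*}=H-2i\mathbb{I}$. Setting $C:=H-i\mathbb{I}$ then gives immediately $C_{jj}=0$, $|C_{jk}|=|H_{jk}|=1$ for $j\neq k$, and $C^{*}=H^{*}+i\mathbb{I}=H-i\mathbb{I}=C$, so $C$ is self-adjoint with vanishing diagonal and unimodular off-diagonal entries.

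It then remains to verify the conference relation. From $HH^{*}=n\mathbb{I}$ and $H^{*}=H-2i\mathbb{I}$ one obtains $H^{2}=2iH+n\mathbb{I}$, and therefore
\begin{equation*}
CC^{*}=C^{2}=(H-i\mathbb{I})^{2}=H^{2}-2iH-\mathbb{I}=(n-1)\mathbb{I},
\end{equation*}
so $C$ is a self-adjoint complex conference matrix and $H=C+i\mathbb{I}$ is equivalent to $R$ in the sense of~(\ref{eq:equi_relations2}). The computation is essentially bookkeeping and uses no permutations or row operations; the only points demanding attention are the stability of robustness under the dephasing and the observation that the dephased $2\times2$ blocks being Hadamard is exactly the self-adjointness condition $H_{ij}=\overline{H_{ji}}$ — once these are in place, the identity $H^{2}=2iH+n\mathbb{I}$ finishes the argument.
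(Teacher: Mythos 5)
Your proof is correct and follows essentially the same route as the paper's: dephase the diagonal of $R$ to $i$ by right multiplication with a unitary diagonal matrix, read off $H_{ij}=\overline{H_{ji}}$ from the $2\times 2$ Hadamard blocks, and deduce $CC^{*}=(n-1)\mathbb{I}$ from $HH^{*}=n\mathbb{I}$ together with $H^{*}=H-2i\mathbb{I}$. The only cosmetic differences are that you verify explicitly that robustness survives the dephasing (which the paper merely asserts) and that you route the final computation through $H^{2}=2iH+n\mathbb{I}$ rather than expanding $(H-i\mathbb{I})(H^{*}+i\mathbb{I})$ directly.
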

\begin{proof}
	The relation between $R$ and $H$ is explicitly:
	\begin{equation*}
		H=iRD^{*}
	\end{equation*}
where $D$ is the diagonal matrix containing diagonal entries of $R$ and $i$ is imaginary unit. All of the diagonal elements of $H$ are equal to $i$. Because D is a unitary matrix, then $R \sim H$ and thus 
$H$ is  a robust Hadamard matrix. This implies that any principal submatrix of H of order two,
	\[
			M_2 =\begin{pmatrix} i & h_{jk}\\
			h_{kj} & i \\
			\end{pmatrix}
		\]
has to satisfy $|{\rm det}(M_2)|=2$. Because $|h_{jk}|=1$ for any $j,k$, then $h_{jk}=h_{kj}^{*}$.
It implies that the matrix consisting of off-diagonal elements of $H$, namely $C=H-i\mathbb{I}$ is self-adjoint $(C=C^{*})$ and $|C_{jk}|=1$ for $j \ne k$. Self-adjointness of $C$, it implies that $H-H^{*}=2i\mathbb{I}$. Finally we can obtain:
	\begin{equation*}
		CC^{*}=(H-i\mathbb{I})(H^{*}+i\mathbb{I})=HH^{*}+i(H-H^{*})+\mathbb{I}=n\mathbb{I}-2\mathbb{I}+\mathbb{I}=(n-1)\mathbb{I}
	\end{equation*}
which completes the proof.
\end{proof}
Above result shows that robust complex Hadamard matrices are essentially a family of matrices equivalent to self-adjoint complex conference matrices with the diagonal elements filled with imaginary unit $i$.
\begin{lemma}
	If a robust Hadamard matrix $H$ of order $n$ exists, then $n$ is even.
\end{lemma}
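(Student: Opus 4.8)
The plan is to reduce to the self-adjoint complex conference matrix produced in Lemma~\ref{lemma:robust_self-adjoint} and then run a short spectral argument. By that lemma, any robust Hadamard matrix of order $n \ge 2$ is equivalent, in the sense of eq.~(\ref{eq:equi_relations2}), to a matrix $H = C + i\mathbb{I}$, where $C$ is a self-adjoint complex conference matrix: $C = C^*$, $C_{jj} = 0$, $|C_{jk}| = 1$ for $j \ne k$, and $CC^* = (n-1)\mathbb{I}$. Since this equivalence preserves the order, it suffices to show that the existence of such a $C$ forces $n$ to be even.

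First I would use self-adjointness: $C$ is unitarily diagonalizable with real spectrum, and together with $C^2 = CC^* = (n-1)\mathbb{I}$ this gives that every eigenvalue $\lambda$ of $C$ satisfies $\lambda^2 = n-1$, hence $\lambda \in \{+\sqrt{n-1},\, -\sqrt{n-1}\}$. Denote by $p$ and $q$ the respective multiplicities, so $p + q = n$.

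Next I would invoke the trace. All diagonal entries of $C$ vanish, so $\operatorname{Tr} C = 0$; on the other hand $\operatorname{Tr} C = (p - q)\sqrt{n-1}$. For $n \ge 2$ we have $\sqrt{n-1} > 0$, so $p = q$ and therefore $n = 2p$ is even. (The case $n = 1$ is excluded by the hypothesis $n \ge 2$ of Lemma~\ref{lemma:robust_self-adjoint}, consistently with the fact that the $1 \times 1$ matrix $(1)$ is vacuously robust.)

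I do not expect a genuine obstacle here; the only point requiring care is to apply Lemma~\ref{lemma:robust_self-adjoint} so that the \emph{self-adjointness} of $C$ — rather than mere unitarity up to scaling — is available, since it is exactly self-adjointness that makes the spectrum real and lets the trace-zero condition split it into two equal halves. Equivalently, one may phrase the argument as: $\tfrac{1}{\sqrt{n-1}}C$ is a Hermitian involution, hence of the form $P_+ - P_-$ for complementary orthogonal projections $P_\pm$ with $P_+ + P_- = \mathbb{I}$, and $\operatorname{Tr} C = 0$ forces $\operatorname{rank} P_+ = \operatorname{rank} P_-$, so that $n = \operatorname{rank} P_+ + \operatorname{rank} P_-$ is even.
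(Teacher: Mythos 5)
Your proof is correct and follows essentially the same route as the paper's: reduce via Lemma~\ref{lemma:robust_self-adjoint} to a self-adjoint complex conference matrix $C$, observe that its spectrum is $\{\pm\sqrt{n-1}\}$, and use $\operatorname{Tr} C = 0$ to conclude the two eigenvalue multiplicities are equal, so $n$ is even. Your explicit handling of the $n=1$ edge case and the reformulation via the Hermitian involution $\tfrac{1}{\sqrt{n-1}}C$ are harmless refinements of the same argument.
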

\begin{proof}
	Applying Lemma \ref{lemma:robust_self-adjoint} we can restrict ourselves to the case $H=C+i\mathbb{I}$. Since $C=C^{*}$ and $C$ is unitary up to scalar, then its spectrum is real and the eigenvalues are equal to $\pm \sqrt{n-1}$. Note also that $C$ is traceless. Therefore, for $n \ge 2$, the number of positive and negative eigenvalues must be equal.  It follows that the number of eigenvalues, which is equal to $n$, must be even.
\end{proof}

\end{document}